\documentclass[reqno]{amsart}

\usepackage{graphicx, mathscinet}

%%-----------------------------------------------------------------------------
%% --> THE METRIC DATA
%%-----------------------------------------------------------------------------
\setlength\oddsidemargin {-30pt}\setlength\evensidemargin{-30pt}
\setlength{\textwidth}{170mm}\setlength{\textheight}{210mm}
\headheight=12.45pt
%--------------------------------------------------------------------------
\newtheorem{theorem}{Theorem}

\newtheorem{proposition}[theorem]{Proposition}

\newtheorem{corollary}[theorem]{Corollary}
\newtheorem{lemma}[theorem]{Lemma}

\newtheorem{hypothesis}[theorem]{Hypothesis}

\newtheorem{remark}{Remark}

%--------------------------------------------------------------------------
\def\qed{\hbox{${\vcenter{\vbox{		 %HOLLOW SQUARE
   \hrule height 0.4pt\hbox{\vrule width 0.4pt height 6pt
   \kern5pt\vrule width 0.4pt}\hrule height 0.4pt}}}$}}

%--------------------------------------------------------------------------

\def\cD{\mathcal D}

\def\cF{\mathcal F}

\def\cL{\mathcal L}
\def\cM{\mathcal M}
\def\cN{\mathcal N}

\def\bC{\mathbb C}

\def\bE{\mathbb E}

\def\bN{\mathbb N}

\def\bP{\mathbb P}

\def\bR{\mathbb R}

\def\bZ{\mathbb Z}

%

%

%

%    Absolute value notation

%

%

\begin{document}

\title{High Order Heat-type Equations and Random Walks on the Complex Plane}

\author{Stefano Bonaccorsi and Sonia Mazzucchi} 

\address{Stefano Bonaccorsi \newline
Dipartimento di Matematica, Universit\`a di
    Trento, via Sommarive 14, 38123 Povo (Trento), Italia
}
\email{stefano.bonaccorsi@unitn.it}

\address{Sonia Mazzucchi \newline
Dipartimento di Matematica, Universit\`a di
    Trento, via Sommarive 14, 38123 Povo (Trento), Italia
}
\email{ sonia.mazzucchi@unitn.it}

\subjclass[2000]{35C15,60G50,60G20,60F05}
\keywords{Partial differential equations, probabilistic representation of solutions of PDEs, stochastic processes.}

\begin{abstract}
A probabilistic construction for the solution of a general class of high order heat-type equations is constructed in terms of the scaling limit of  random walks in the complex plane. 
\end{abstract}

\maketitle

%=========================================================================

\section{Introduction}\label{sez1}

The connection between the solution of parabolic equations associated to second-order elliptic operators and the theory of stochastic processes is a largely studied topic \cite{Dynkin2006}.
The main instance is the {\em Feynman-Kac formula} \eqref{Fey-Kac}, providing a representation of the solution of the heat equation \eqref{heat} with possibly a potential $V\in C_0^\infty (\bR^d)$
\begin{equation} \label{heat}\left\{ \begin{array}{l}
\frac{\partial}{\partial t}u(t,x)=\frac{1}{2}\Delta u(t,x) -V(x)u(t,x),\qquad t\in \bR^+, x\in\bR^d\\
u (0,x)=u _0(x)\\
\end{array}\right. \end{equation}
in terms of an integral with respect to the measure of the Wiener process, the mathematical model of the Brownian motion \cite{Karatzas1991}:
\begin{equation}\label{Fey-Kac}
u (t,x)=\bE^x[ e^{-\int_0^tV(\omega(s))ds}u_0( \omega(t))].\end{equation}
If the Laplacian in Eq. \eqref{heat} is replaced by an higher order differential operator, i.e. if we consider for instance a Cauchy problem of the form
\begin{equation} \label{heatN}
\left\{ \begin{array}{l}
\frac{\partial}{\partial t}u(t,x)=(-1)^{M+1} \Delta^M u(t,x) -V(x)u(t,x),\quad t\in \bR^+, x\in\bR, \\
u (0,x)=u _0(x)\\
\end{array}\right. \end{equation}
where  $M>1$ is an integer, 
then a formula analogous to \eqref{Fey-Kac},  giving the solution of \eqref{heatN} in terms of the expectation with respect to the measure associated to a Markov process, is lacking. In fact, such a formula cannot be proved for semigroups whose generator does not satisfy the maximum principle, as in the case of $ \Delta^M$ with $M>1$ \cite{Sinestrari1976}.

One of the reasons making  the higher powers of the Laplacian and equation \eqref{heatN} more difficult to handle than the traditional heat equation is the fact that, unlike the case where $M=1$, for $M>1$ the fundamental solution $G_t(x,y)$, $t\in\bR^+$, $x,y\in\bR$, is not positive.  
In fact it has an oscillatory behavior, changing sign an infinite number of times \cite{Hochberg1978}.
Consequently it cannot be interpreted as the density of a positive probability measure, as the Gaussian transition densities of the Brownian motion, but only as the density of a signed measure. 
 This fact has the troublesome consequence that if one uses $G$ as a signed transition probability density in the construction of a generalized stochastic process with real path and independent increments,  the resulting measure  on $\bR^{[0,+\infty)} $ would have infinite total variation. 
 This fact was pointed out in \cite{Krylov1960}   and can be regarded as a particular case of a general result by E. Thomas \cite{Thomas2001}, generalizing Kolmogorov existence theorem to projective families of signed or complex measures instead of probability measures. 
 In other words it is not possible to find a stochastic process $X_t$ which plays for the parabolic equation \eqref{e1} the same role that the Wiener process plays for the heat equation.

We would like to point out that the  problem of the probabilistic representation of the solution of the Cauchy problem \eqref{heatN} presents some similarities with the problem of the mathematical definition of Feynman path integrals and the functional integral representation for the solution of the Schr\"odinger equation (see \cite{ Mazzucchi2009, Johnson2000} for a discussion of this topic). 
Indeed in both cases it is not possible to implement an integration theory of Lebesgue type in terms of a bounded variation measure on a space of real  paths \cite{Cameron1960/1961}.  
This means that an analogous of the Feynman-Kac formula for the parabolic equation \eqref{heatN}, namely a representation for its solution  of the form:
 \begin{equation}\label{Fey-KacN}
u (t,x)=\int_{\omega(0)=x} e^{-\int_0^tV(\omega(s))ds}u_0( \omega(t))d\mathbb P_M(\omega),
\end{equation}
(where $\mathbb P_M$ should be some ``measure'' on a space of ``paths'' $\omega:[0,t]\to\bR$) cannot be realized in  terms of a well defined Lebesgue-type integral, but, in a weaker sense, in terms of a linear functional on a suitable class of "integrable functions",  under some restrictions on the initial datum $u_0$ and  the potential $V$. An analogous approach has been successfully implemented in the case of the Schr\"odinger equation \cite{Albeverio2008}.

Several attempts have been made to relate such problem, as in the case $M = 1$, to a random process,  in particular
for the case $M = 2$ (known in the literature as the {\em biharmonic} operator). 

One of the first approaches was introduced by  Krylov \cite{Krylov1960} and continued by Hochberg \cite{Hochberg1978}, who introduced a stochastic pseudo-process whose transition probability function is not positive definite and realized formula \eqref{Fey-KacN} in terms of the expectation with respect to a  signed measure on $\bR^{[0,t]}$ with infinite total variation. 
For this reason the integral in  is not defined in Lebesgue sense, but  is meant as limit of finite dimensional cylindrical approximations \cite{Beghin2000}.
It is worthwhile to mention that an analogous of the arc-sine law \cite{Hochberg1978, Hochberg1994}, of the  central limit theorem \cite{Hochberg1980} and of It\^o formula and It\^o stochastic calculus \cite{Hochberg1978} have been proved for the Krylov-Hochberg pseudo-process.
In the same line, we shall mention the papers by  Nishioka \cite{Nishioka1996, Nishioka2001}.
We also mention the work by D. Levin and T. Lyons \cite{Levin2009} on rough paths, conjecturing that the signed measure (with infinite total variation)  associated to the pseudo-process could exist on the quotient space of equivalence classes of paths corresponding to different parametrization of the same path.

A different approach was proposed by Funaki \cite{Funaki1979} and continued by Burdzy \cite{Burdzy1993}, based 
on  a complex valued stochastic process with dependent increments constructed by composing two independent Brownian motions. 
Formula \eqref{Fey-KacN}, representing the solution of \eqref{heatN}  in the case $M=2$ and $V=0$,  can be realized as an integral  with respect to a well defined positive probability measure on a complex space, at least  for a suitable class of analytic initial datum $u_0$.
The result can be generalized to  partial differential equations of order $2^n$, by multiple iterations of suitable processes \cite{Funaki1979, Hochberg1996, Orsingher1999}. 
These results are also related to Bochner subordination \cite{Bochner1955}. There are also similarities between the Funaki's process  and the {\em iterated Brownian motion} \cite{Burdzy1993}, but the latter is not connected to the  probabilistic representation of the solution of a partial differential equation with regular coefficients. 
In fact the processes constructed by iterating copies of independent BMs (or other process) are associated to higher order PDE of particular form, where the initial datum plays a particular role and enters also in the differential equation \cite{Allouba2002}.

Complex valued processes, connected to PDE of the form \eqref{heatN} have been also proposed by other authors by means of different techniques \cite{Burdzy1995, Madrecki1993, Burdzy1996}. 
 It is worthwhile also to mention a completely different approach proposed by R. L\'eandre \cite{Leandre2010}, which has some analogies with the mathematical realization of Feynman path integrals by means of white noise calculus \cite{Hida1993}. 
 Indeed  L\'eandre has recently constructed a "probabilistic representation" of the solution of the Cauchy problem \eqref{heatN} not as an integral with respect to a
 measure but as an infinite dimensional distribution on the Connes space \cite{Leandre2003, Leandre2006}.
\\
We eventually mention another probabilistic approach to the equation $\Delta^k u=0$ described in \cite{Helms1967}.

\smallskip

In the present paper we propose a new probabilistic construction for the solution of a general class of high order heat-type equations.
\par\noindent
Let us fix an integer $N\in \bN$, with $N > 2$,  and consider the parabolic Cauchy problem
\begin{equation}
\label{e1}
\left\{ \begin{array}{l}
\partial_t u(t,x) = \alpha \,  \partial_x^N u(t,x),
\\
u(0,x) = f(x), \qquad x \in \bR,
\end{array}\right. \end{equation}
where $\alpha \in\bC$ is a complex constant and $f:\bR\to\bC$ the initial datum. It is worthwhile to point out that, extending the problem in \eqref{heatN}, 
we can handle the case of general (even and odd) powers of the  operator $\partial_x$ appearing on the right hand side of Eq. \eqref{e1}. 
Moreover we do not only study the problem on the real line, but we can also consider (bounded) domains with  different boundary conditions.

In sections \ref{sez2}, \ref{sez3}, \ref{sez4} and \ref{sez5} we construct a sequence of random walks $\{W_n(t) \}_n$ on the complex plane and prove a representation for the solution of Eq.\ \eqref{e1} in terms of the limit of expectations with respect to the measure associated to $W_n$.
In section \ref{sez6} we give a semigroup formulation of these problems. In section \ref{sez7} we generalized these results to the case where Eq.\ \eqref{e1} is restricted to the half real line $[0,+\infty)$ or to a bounded interval $[0,L]\subset \bR$ and different boundary conditions are considered.

\section{Preliminaries}\label{sez2}

Let $(\Omega, \cF, \bP)$ be a probability space. Let $\alpha$ be a complex number and $N > 2$ a given integer.
\\
Let $R(N)= \{e^{2 i \pi k/N},\ k=0,1,\dots, N-1\}$ be the roots of the unity.
Then we consider the random variable $\xi$ that has uniform distribution on the set $\alpha^{1/N} R(N)$:
\begin{equation}
\label{e2}
\bE[f(\xi)] = \frac{1}{N} \, \sum_{k=0}^{N-1} f(\alpha^{1/N} e^{2 i \pi k/ N}).
\end{equation}

\begin{lemma}\label{lp1}
The random variable $\xi$ has finite moments of every order
\begin{equation}
\label{e3}
\bE[\xi^m] =
\begin{cases}
\alpha^{m/N}, & m = n N,\ n \in \bN,
\\
0, & \text{otherwise}
\end{cases}
\end{equation}
\end{lemma}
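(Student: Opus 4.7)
The plan is to verify the moment formula by direct computation, exploiting the fact that $\xi$ takes only $N$ values and that sums of powers of roots of unity collapse to a simple indicator.

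First, I would observe that finiteness of all moments is immediate: $\xi$ is a discrete random variable supported on the finite set $\alpha^{1/N}R(N)$, so $|\xi|=|\alpha|^{1/N}$ almost surely and $\bE[|\xi|^m]=|\alpha|^{m/N}<\infty$ for every $m\in\bN$.

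Next, I would apply the definition \eqref{e2} to the function $f(z)=z^m$ and factor out the common modulus:
\begin{equation*}
\bE[\xi^m] \;=\; \frac{1}{N}\sum_{k=0}^{N-1}\bigl(\alpha^{1/N}e^{2i\pi k/N}\bigr)^{m} \;=\; \frac{\alpha^{m/N}}{N}\sum_{k=0}^{N-1}\bigl(e^{2i\pi m/N}\bigr)^{k}.
\end{equation*}
The remaining sum is the classical geometric sum of $N$-th roots of unity raised to the power $m$. If $m=nN$ for some $n\in\bN$, then $e^{2i\pi m/N}=e^{2i\pi n}=1$, so each summand equals $1$ and the sum is exactly $N$; this yields $\bE[\xi^m]=\alpha^{m/N}=\alpha^{n}$. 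Otherwise $e^{2i\pi m/N}\neq 1$, and the finite geometric series gives
\begin{equation*}
\sum_{k=0}^{N-1}\bigl(e^{2i\pi m/N}\bigr)^{k}\;=\;\frac{1-e^{2i\pi m}}{1-e^{2i\pi m/N}}\;=\;0,
\end{equation*}
since $e^{2i\pi m}=1$ for every integer $m$.

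There is no real obstacle here; the only minor point worth noting is the mild ambiguity in the symbol $\alpha^{1/N}$ when $\alpha\in\bC$. I would fix at the outset a choice of $N$-th root of $\alpha$, so that the set $\alpha^{1/N}R(N)$ is unambiguously defined and the identity $(\alpha^{1/N})^{N}=\alpha$ holds; the result $\bE[\xi^{nN}]=\alpha^{n}$ is then independent of that choice.
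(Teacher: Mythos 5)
Your proof is correct and follows essentially the same route as the paper's: expand $\bE[\xi^m]$ via \eqref{e2}, factor out $\alpha^{m/N}$, and evaluate the geometric sum of roots of unity, which equals $N$ when $N\mid m$ and vanishes otherwise. Your additional remarks on the finiteness of moments and on fixing a branch of $\alpha^{1/N}$ are sensible but do not change the argument.
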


\begin{proof}
We compute
\begin{align*}
\bE[\xi^m] = \frac{1}{N} \sum_{k=0}^{N-1} \alpha^{m/N} e^{2 i \pi m k/N};
\end{align*}
if $m/N = n \in \bN$, since $e^{2 i \pi n} = 1$ then each term in the sum is equal to 1;
in the other case, we employ the trigonometric sum
\begin{align*}
\sum_{k=0}^{N-1}  e^{2 i \pi m k/N} = \frac{1 - e^{2 i \pi m}}{1 - e^{2 i \pi m/N}} = 0.
\end{align*}
\end{proof}

In particular, its characteristic function is
\begin{align*}
\psi_\xi(\lambda) = \frac{1}{N} \, \sum_{k=0}^{N-1} \exp(i\alpha^{1/N} e^{2 i \pi k/ N}).
\end{align*}
Further, we may compute the absolute moments of $\xi$; then we get:
\begin{align*}
\bE[|\xi|^m] = |\alpha|^{m/N}.
\end{align*}

We can identify $\xi$ with a random vector in the real plane, so that $\xi$ is a centered random variable having covariance matrix
\begin{align*}
\frac12 \, |\alpha|^{2/N} \, I
\end{align*}
where $I$ is the $2\times2$ identity matrix.

\medskip

In the spirit of Hochberg \cite{Hochberg1978} and Burdzy \cite{Burdzy1995} we consider a nonstandard central limit theorem for a sequence of i.i.d. copies of the random variable $\xi$.
We explicitly note that the scaling exponent $1/N$ is weaker than that of the classical CLT and is related to the order of spatial derivative in \eqref{e1}.
We also note that, for $N > 2$, the function $\exp(c x^N)$ is not a well defined characteristic function.
However, we shall see that, in some weak sense, it is associated to a scaling limit for the random walk generated by $\xi$,
hence we shall use for it the name of {\em stable distribution} in analogy with the case $N \le 2$.

\begin{theorem}
\label{t1}
Let $\{\xi_j,\ j \in \bN\}$ be a sequence of i.i.d.\ random variable having uniform distribution on the set $\alpha^{1/N} R(N)$ as in \eqref{e2}.
Let $S_n$ be the random walk defined by the $\{\xi_j\}$, i.e.,
\begin{align*}
S_n = \sum_{j=1}^n \xi_j.
\end{align*}
Then the distribution of the normalized random walk
\begin{align*}
\tilde S_n = \frac{1}{n^{1/N}} S_n
\end{align*}
converges to a stable distribution of order $N$ in the sense that
\begin{equation}
\lim_{n \to \infty} \bE[\exp(i \lambda \tilde S_n)] = \exp \left(\frac{i^N \alpha}{N!} \lambda^N \right).
\end{equation}
\end{theorem}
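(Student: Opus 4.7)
The plan is to compute the characteristic function of $\tilde S_n$ directly using independence and then perform a controlled Taylor expansion, exploiting the vanishing moments established in Lemma~\ref{lp1}.

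First, by the i.i.d. assumption,
\begin{equation*}
\bE[\exp(i\lambda \tilde S_n)] = \bE[\exp(i\lambda n^{-1/N}\xi_1)]^n = \psi_\xi(\lambda n^{-1/N})^n,
\end{equation*}
so everything reduces to understanding $\psi_\xi$ near the origin. Since $\xi$ takes only finitely many values of modulus $|\alpha|^{1/N}$, its characteristic function is entire in $\lambda$ and may be expanded as a power series whose coefficients are the moments of $\xi$. By Lemma~\ref{lp1}, only the moments of order divisible by $N$ survive, so
\begin{equation*}
\psi_\xi(\mu) = \sum_{k=0}^{\infty} \frac{(i\mu)^{kN}}{(kN)!}\, \alpha^{k}.
\end{equation*}

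Next I substitute $\mu = \lambda n^{-1/N}$ and isolate the leading non-constant term:
\begin{equation*}
\psi_\xi(\lambda n^{-1/N}) = 1 + \frac{i^N \alpha \lambda^N}{N!}\cdot\frac{1}{n} + R_n(\lambda),
\end{equation*}
where $R_n(\lambda) = \sum_{k\ge 2} \frac{(i\lambda)^{kN}}{(kN)!}\, \alpha^{k} n^{-k}$. The absolute-moment bound $\bE[|\xi|^m]=|\alpha|^{m/N}$ from the Preliminaries gives $|R_n(\lambda)| \le C(\lambda)/n^2$ uniformly for $\lambda$ in compact sets. Taking $n$-th powers and applying the elementary identity $\lim_n (1 + z/n + o(1/n))^n = e^{z}$ yields
\begin{equation*}
\lim_{n\to\infty}\psi_\xi(\lambda n^{-1/N})^n = \exp\!\left(\frac{i^N \alpha \lambda^N}{N!}\right),
\end{equation*}
which is the claim.

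The only potentially subtle point is that for $N>2$ the right-hand side is not itself the characteristic function of a probability measure (indeed $\exp(c x^N)$ need not even be bounded on $\bR$), so one cannot invoke the Lévy continuity theorem to conclude convergence in distribution. However the statement of Theorem~\ref{t1} only asserts pointwise convergence of characteristic functions, which is precisely what the Taylor estimate above delivers; this is also the reason the authors stress the word ``weak'' and place {\em stable distribution} in quotation marks. Thus the main (and essentially only) obstacle is writing a clean bound on the remainder $R_n$, and boundedness of $\xi$ makes this immediate.
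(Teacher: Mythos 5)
Your proof is correct and follows essentially the same route as the paper's: factor the characteristic function by independence and Taylor-expand $\psi_\xi(\lambda n^{-1/N})$, with the vanishing of all moments not divisible by $N$ (Lemma~\ref{lp1}) producing the $i^N\alpha\lambda^N/(N!\,n)$ leading term. Your bookkeeping via the explicit moment series and the limit $(1+z/n+o(1/n))^n\to e^z$ is in fact a little cleaner than the paper's chain of $\sim$ estimates through $\exp(n\log(\cdot))$, and your closing remark about why L\'evy continuity is unavailable for $N>2$ correctly identifies the sense in which the convergence is only formal.
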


\begin{proof}
We get
\begin{align*}
\bE[\exp(i \lambda \tilde S_n)]  =
\bE\left[ \exp\left(i\lambda \frac{1}{n^{1/N}}\sum_{j=1}^n\xi_{j}\right) \right]
= \prod_{j=1}^n \bE\left[ \exp\left(\frac{i\lambda \xi_{j} }{n^{1/N}} \right) \right]
= \left( \bE\left[ \exp\left( \frac{i\lambda \xi}{n^{1/N}} \right) \right] \right)^n
\end{align*}
Now, one has that 
\begin{align*}
\bE \left[ e^{ \frac{i\lambda \xi}{n^{1/N}} } \right] = \frac{1}{N} \sum _{k=0}^{N-1} \exp \left({ i\frac{\lambda}{n^{1/N} } \alpha^{1/N}e^{ik\frac{2\pi}{N}}}\right)
\end{align*}
and
\begin{align*}
\left( \bE \left[ e^{ \frac{i\lambda \xi}{n^{1/N}} } \right] \right)^n = \exp \left({n\log\left(  \bE \left[ e^{  \frac{i\lambda \xi}{n^{1/N}}  }  \right] \right)} \right).
\end{align*}
For $n\to \infty$ (in the sequel, we write $f(n) \sim g(n)$ if the two sequences have the same behavior at infinity, i.e., $\displaystyle \lim_{n \to \infty} \frac{f(n)}{g(n)} = 1$)
\begin{multline*}
\log\left(  \bE[ e^{  \frac{i\lambda \xi}{n^{1/N}}  }  ]\right) =\log\left(1+\frac{1}{N}\sum _{k=0}^{N-1} e^{ i\frac{\lambda}{n^{1/N} } \alpha^{1/N}e^{ik\frac{2\pi}{N}}   } -1\right)  
  \sim  \frac{1}{N}\sum _{k=0}^{N-1} \left(  e^{ i\frac{\lambda}{n^{1/N} } \alpha^{1/N}e^{ik\frac{2\pi}{N}}   } -1  \right) \\
 \sim  \frac{1}{N}\sum _{k=0}^{N-1} \frac{1}{N!}\left( i\frac{\lambda}{n^{1/N}}\alpha^{1/N} e^{ik\frac{2\pi}{N}} \right)^N 
  =  \frac{1}{N}\sum _{k=0}^{N-1} \frac{1}{N!}\frac{(i)^N\lambda^N\alpha}{n}=\frac{1}{N!}\frac{(i)^N\lambda^N\alpha}{n}
\end{multline*}
As a consequence we get the thesis: 
\begin{align*}
\lim_{n\to\infty} \bE \left[ e^{i\lambda \tilde S_{ n}} \right] = \lim_{n\to\infty} \exp \left( {n\log\left(  \bE \left[ e^{  \frac{i\lambda \xi}{n^{1/N}}  } \right] \right)} \right) = \exp\left(\frac{(i)^N\lambda^N\alpha}{N!}\right).
\end{align*}
\end{proof}

Notice that in the case $N=2$ the statement is completely equivalent to the classical CLT. In case $N=4$ a similar result was proved in Burdzy \cite[Theorem 3.1 (iii)]{Burdzy1995}.
A central limit theorem for the sum of i.i.d.\ random variables under a {\em signed} measure is proved in Hochberg \cite[Section 5]{Hochberg1978}.

\medskip

We proceed with the analysis of the moments of the normalized random walk $\tilde S_n$.
We are in particular interested to the asymptotic behavior of such moments and, according to \eqref{e3}, we expect that the moment of order $m$ vanishes if $m$ is not a multiple of $N$.
Next result provides an answer to these questions.

\begin{theorem}
Fix $m \in \bN$ and assume that $n$ is large ($n > m$).
Then the $m$-moment of $\tilde S_n$ satisfies
\begin{align*}
\bE[(\tilde S_n)^m] =
\begin{cases}
\left(\frac{\alpha}{N!}\right)^{m/N} \frac{m!}{(m/N)!} + R_n,  & m = M N,\ M \in \bN, \, 
\\
0, & \text{otherwise}
\end{cases}
\end{align*}
where $\lim_{n\to\infty}R_n=0$.
\end{theorem}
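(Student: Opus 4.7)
The natural approach is to expand $(\tilde S_n)^m = n^{-m/N}(\sum_j \xi_j)^m$ by the multinomial theorem, exploit independence of the $\xi_j$'s to factor the expectation, and apply Lemma \ref{lp1} to determine which terms survive and at what order in $n$.

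First I would write
\begin{align*}
\bE[(\tilde S_n)^m] = \frac{1}{n^{m/N}} \sum_{k_1+\cdots+k_n = m} \binom{m}{k_1,\dots,k_n} \prod_{j=1}^n \bE[\xi^{k_j}].
\end{align*}
By \eqref{e3}, each factor $\bE[\xi^{k_j}]$ vanishes unless $k_j$ is a multiple of $N$. If $N \nmid m$, no tuple $(k_1,\dots,k_n)$ summing to $m$ can have every entry divisible by $N$, so every term is zero and $\bE[(\tilde S_n)^m] = 0$.

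If $m=MN$, the surviving tuples have the form $k_j = N\ell_j$ with $\ell_j\ge 0$ and $\sum_j \ell_j = M$, and on those tuples $\prod_j \bE[\xi^{N\ell_j}] = \alpha^M$. I would then group the tuples by the number $k$ of strictly positive $\ell_j$'s. For fixed $k$ the number of such tuples is $\binom{n}{k}$ times the number of compositions of $M$ into $k$ positive parts (namely $\binom{M-1}{k-1}$, a constant in $n$), while the corresponding multinomial coefficients are bounded by $(MN)!$. Hence the total contribution of tuples with exactly $k$ positive entries is $O(n^k)$, and after the $n^{-M}$ normalization it is $O(n^{k-M}) \to 0$ for every $k < M$.

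The leading contribution comes from $k=M$: exactly $M$ of the $\ell_j$'s equal $1$ and the rest equal $0$. The multinomial coefficient is $(MN)!/(N!)^M$, the number of such tuples is $\binom{n}{M}$, and $\binom{n}{M} = n^M/M! + O(n^{M-1})$ yields
\[
\frac{\alpha^M}{n^M}\,\binom{n}{M}\,\frac{(MN)!}{(N!)^M} \;=\; \left(\frac{\alpha}{N!}\right)^{m/N}\frac{m!}{(m/N)!} + o(1),
\]
so collecting the $o(1)$ correction here together with the $O(n^{k-M})$ contributions for $k<M$ into a single remainder $R_n$ gives the claim. The main obstacle is simply the combinatorial bookkeeping in the remainder estimate: one must verify that, for each fixed $k<M$, both the number of compositions of $M$ into $k$ positive parts and the associated multinomial coefficients stay bounded independently of $n$, so that the contribution from $k$-part tuples is genuinely $O(n^k)$ rather than growing with $n$ through the multinomial weights. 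Once that uniform bound is in hand, the remaining steps are a straightforward asymptotic expansion.
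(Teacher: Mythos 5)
Your argument is correct and complete, but it follows a different route from the paper. The paper computes $\bE[(\tilde S_n)^m]$ as $(-i)^m\psi_n^{(m)}(0)$, where $\psi_n(\lambda)=\bigl(\psi_\xi(\lambda/n^{1/N})\bigr)^n$, and then invokes Fa\'a di Bruno's formula for the $m$-th derivative of the $n$-th power; the surviving terms are identified through the vanishing of $\psi_\xi^{(j)}(0)=i^j\bE[\xi^j]$ for $j$ not a multiple of $N$, and the leading term is read off from the partition with $m_N=M$ and all other $m_j=0$. You instead expand $(\sum_j\xi_j)^m$ directly by the multinomial theorem and use independence together with Lemma \ref{lp1}; the surviving tuples are exactly those with every exponent a multiple of $N$, and the leading contribution comes from the $\binom{n}{M}$ tuples with $M$ entries equal to $N$. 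The two computations encode the same combinatorics (the Fa\'a di Bruno partition with $m_N=M$ corresponds precisely to your $k=M$ tuples, and your count $\binom{n}{M}\frac{(MN)!}{(N!)^M}\alpha^M n^{-M}\to\bigl(\frac{\alpha}{N!}\bigr)^M\frac{(MN)!}{M!}$ reproduces the paper's leading term), but your version is more elementary: it avoids differentiating the characteristic function and makes the remainder estimate transparent, since the $O(n^{k-M})$ bound for $k<M$ follows from counting tuples with $k$ positive parts and bounding the multinomial weights by $(MN)!$ uniformly in $n$. The paper's version has the advantage of slotting directly into its characteristic-function framework used throughout Sections \ref{sez2}--\ref{sez4}.
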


\begin{proof}
We have that 
$$\bE[(\tilde S_n)^k] =(-i)^k\frac{d^k}{d\lambda^k} \psi_n(0),$$
where $\psi_n$ is the characteristic function of $\tilde S_n$,  given by
\begin{align*}
\psi_n(\lambda) = \left( \bE\left[\exp\left(\frac{1}{n^{1/N}} i \lambda \xi\right)\right] \right)^n = \left( \psi_\xi\left(\frac{\lambda}{n^{1/N}}\right) \right)^{n}.
\end{align*}
By Fa\'a di Bruno's formula, for $n>k$
\begin{multline*}
\frac{d^k}{d\lambda^k} \psi_n(\lambda)
\\
=
\sum \frac{k!}{m_1!m_2! \cdots m_k!}\frac{n!}{n-(m_1+m_2+\dots+m_k)}\left(\psi_\xi(\lambda/n^{1/N})\right)^{n-(m_1+m_2+\dots+m_k)}\Pi_{j=1}^k\left(\frac{\psi_\xi^{(j)}(\lambda/n^{1/N})}{j!n^{j/N}}\right)^{m_j}
\end{multline*}
where the sum is over the $k-$ple of non-negative integers $(m_1,m_2,...,m_k)$ such that $m_1+2m_2+\dots +km_k=k$.
In particular we have:
\begin{equation}\label{FaadiBruno}
\frac{d^k}{d\lambda^k} \psi_n(0)=\sum \frac{k!}{m_1!m_2! \cdots m_k!}\frac{n!}{n-(m_1+m_2+\dots+m_k)}\Pi_{j=1}^k\left(\frac{\psi_\xi^{(j)}(0)}{j!n^{j/N}}\right)^{m_j}.
\end{equation}
Since $\psi_\xi^{(j)}(0)=(i)^j\bE[\xi^j]$, and $\bE[\xi^j]\neq 0$ iff $j=mN$, with $m\in \bN$, then the product $\Pi_{j=1}^k\left(\frac{\psi_\xi^{(j)}(0)}{j!n^{j/N}}\right)^{m_j}$ is non vanishing iff $m_j=0$ for $j\neq mN$ and $k=Nm_N+2Nm_{2N}+...$, i.e. if $k$ has is a multiple of $N$.\\
On the other hand, for $k=N$ the only term in the sum which does not vanish is the one corresponding to $m_j=0$ for $j\neq N$ and $m_N=1$, and we have
$$\frac{d^N}{d\lambda^N} \psi_n(0)=\frac{\alpha}{(-i)^N}.$$
For $k=2N$, two terms do not vanish:  the first for $m_j=0$ if $j\neq N$ and $m_N=2$, the second for $m_j=0$ if $j\neq 2N$ and $m_{2N}=1$, giving:
$$\frac{d^{2N}}{d\lambda^{2N}} \psi_n(0)=\frac{2N!}{2!(N!)^2}\frac{n(n-1)}{n^2}\frac{\alpha^2}{(-i)^{2N}}+\frac{1}{n}\frac{\alpha^2}{(-i)^{2N}}.$$
More generally, for $k=MN$, we have :
$$\frac{d^{MN}}{d\lambda^{MN}} \psi_n(0)=\frac{(MN)!}{M!}\frac{n!}{n-M}\frac{\alpha^M}{(-i)^{MN}N^Mn^M}+R_n$$
where $R_n\to 0 $ as $n\to \infty$.

\end{proof}

\medskip

Next result makes precise the analysis  of the asymptotic behavior of the limit in Theorem \ref{t1}.
Since its proof is rather technical, we postpone the proof to the Appendix, where it is developed in parallel with similar computations we shall give later in Section \ref{sez4}.

\begin{theorem}\label{theo4}
The following asymptotic limit holds:
\begin{equation}
\label{e4}
\lim_{n \to \infty} n \, \left[\psi_n(\lambda) - \exp(\frac{i^N \alpha}{N!} \lambda^N)\right] =  (-1)^N\left(\frac{1}{(2N)!}-\frac{1}{2(N!)^2}\right)  \alpha^2 \lambda^{2N} \, \exp(\frac{i^N \alpha}{N!} \lambda^N).
\end{equation}
\end{theorem}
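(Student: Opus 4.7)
The plan is to obtain the result by a careful Taylor expansion of $\psi_\xi$ near the origin, followed by exponentiation. The key simplification is that $\psi_\xi$ is an entire function whose Taylor coefficients vanish except when the order is a multiple of $N$, so the expansion has a particularly clean structure.

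First, I would expand $\psi_\xi$. Using the definition $\psi_\xi(\mu)=\frac{1}{N}\sum_{k=0}^{N-1}\exp(i\mu\alpha^{1/N}e^{2\pi ik/N})$ and interchanging sum and series, one has
\begin{equation*}
\psi_\xi(\mu)=\sum_{j\ge 0}\frac{(i\mu)^j\alpha^{j/N}}{j!}\cdot\frac{1}{N}\sum_{k=0}^{N-1}e^{2\pi ikj/N}=\sum_{m\ge 0}\frac{i^{mN}\alpha^m\mu^{mN}}{(mN)!},
\end{equation*}
since the inner trigonometric sum equals $1$ when $N\mid j$ and vanishes otherwise (this is already implicit in Lemma \ref{lp1}). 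In particular
\begin{equation*}
\psi_\xi(\mu)=1+\frac{i^N\alpha}{N!}\mu^N+\frac{(-1)^N\alpha^2}{(2N)!}\mu^{2N}+O(\mu^{3N}),
\end{equation*}
with $i^{2N}=(-1)^N$, and the error is uniform for $\mu$ in any bounded set.

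Next I would take logarithms. Because $\psi_\xi(0)=1$ and $\psi_\xi$ is entire, $\log\psi_\xi(\mu)$ is holomorphic near $\mu=0$. Writing $\psi_\xi(\mu)=1+u(\mu)$ with $u(\mu)=O(\mu^N)$ and using $\log(1+u)=u-\tfrac12 u^2+O(u^3)$, the expansion becomes
\begin{equation*}
\log\psi_\xi(\mu)=\frac{i^N\alpha}{N!}\mu^N+(-1)^N\alpha^2\Bigl(\frac{1}{(2N)!}-\frac{1}{2(N!)^2}\Bigr)\mu^{2N}+O(\mu^{3N}).
\end{equation*}
Substituting $\mu=\lambda/n^{1/N}$ (so that $\mu^N=\lambda^N/n$ and $\mu^{2N}=\lambda^{2N}/n^2$) and multiplying by $n$ yields
\begin{equation*}
n\log\psi_\xi(\lambda/n^{1/N})=\frac{i^N\alpha}{N!}\lambda^N+\frac{B(\lambda)}{n}+O(n^{-2}),
\end{equation*}
where $B(\lambda):=(-1)^N\alpha^2\bigl(\tfrac{1}{(2N)!}-\tfrac{1}{2(N!)^2}\bigr)\lambda^{2N}$.

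Finally, I would exponentiate. Setting $A(\lambda)=\frac{i^N\alpha}{N!}\lambda^N$, one gets $\psi_n(\lambda)=\exp\bigl(A(\lambda)+B(\lambda)/n+O(n^{-2})\bigr)=e^{A(\lambda)}\bigl(1+B(\lambda)/n+O(n^{-2})\bigr)$, hence
\begin{equation*}
n\bigl[\psi_n(\lambda)-e^{A(\lambda)}\bigr]\longrightarrow B(\lambda)\,e^{A(\lambda)},
\end{equation*}
which is exactly the claimed limit \eqref{e4}. The only point requiring some care, and what the authors themselves call ``rather technical,'' is controlling the remainders uniformly so that the $O(n^{-2})$ term genuinely disappears after multiplication by $n$; this is handled by working on bounded $\lambda$ and using the analyticity of $\psi_\xi$ at the origin, which provides absolute bounds on the tails of both the power series of $\psi_\xi$ and of $\log(1+u)$.
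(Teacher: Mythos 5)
Your proposal is correct and follows essentially the same route as the paper's appendix proof: expand $\psi_\xi$ in powers of $\mu$ (only multiples of $N$ survive), apply $\log(1+u)=u-\tfrac12 u^2+O(u^3)$ to extract the $\bigl(\tfrac{1}{(2N)!}-\tfrac{1}{2(N!)^2}\bigr)(-1)^N\alpha^2\lambda^{2N}/n$ correction, and exponentiate. The coefficient computation and the final passage to the limit match the paper exactly.
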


\section{The random walk on the complex plane}\label{sez3}

In this section, we consider the random walk $S_n$ defined above. With no
claim for completeness, we discuss some interesting aspects of the motion.
\\
First, let us consider the case $N=3$. The walk $S_n$ occurs on the
regular lattice generated by the vectors $\{(1,0),\
(-\frac12,\frac{\sqrt{3}}{2}),\ (-\frac12,-\frac{\sqrt{3}}{2})\}$,
considered as a {\em directed} graph.
Therefore, the motion is $3$-periodic, and a return to the origin only
happens if the same number of steps is made in every direction.
Therefore, we compute
\begin{align*}
\bP(S_{3m}=0) = \frac{(3m)!}{(m!)^3} \frac{1}{3^{3m}}
\end{align*}
and Stirling's formula implies
$\bP(S_{3m}=0) \sim \frac{1}{2 \, \pi \, m}$;
hence the expected number of returns to the origin is
\begin{align*}
\sum_{m=1}^\infty \bP(S_{3m}=0) \sim \sum_{m=1}^\infty \frac1m = +\infty
\end{align*}
and the process is recurrent.
\\
The case $N=4$ corresponds to the standard, two-dimensional random walk.
We know that the motion is $2$-periodic and it moves on the lattice
$\bZ^2$ (this time considered as an {\em undirected} graph). Finally, the
motion is recurrent.
\\
Unfortunately, it is not true that the motion is recurrent for every $N$.
Let us see what happens for $N=5$. In this case,
the motion is again $5$-periodic and the only way to return to the origin
is taking the same number of steps in each direction.
Hence, again by an application of Stirling's formula,
\begin{align*}
\bP(S_{5m}=0) = \frac{(5m)!}{(m!)^5} \frac{1}{5^{5m}} \sim
\frac{\sqrt{5}}{(2 \, \pi \, m)^2}
\end{align*}
and the expected number of returns is finite:
\begin{align*}
\sum_{m=1}^\infty \bP(S_{5m}=0)  \sim \sum_{m=1}^\infty \frac{\sqrt{5}}{(2
\, \pi \, m)^2} < \infty
\end{align*}
so the process is transient.
\\
However, we can prove that for every $N\ge3$ the random walk $S_n$ is
neighborhood-recurrent, i.e., for any radii $\varepsilon$, the motion
returns infinitely often in the ball of radii $r$ centered in the origin.
Since this is a weaker result than being recurrent, the following result
is interesting only for $N \ge 5$.

\begin{proposition}
Let $N\ge 5$. The process $\{S_n\}$ is neighborhood-recurrent, i.e., for
every $x$ in the lattice generated by the basis $\{\alpha^{1/N} e^{2 \pi i
k/N},\ k=0,1,\dots,N-1\}$ it holds
\begin{align*}
\bP(|S_n - x| \le \varepsilon \quad \text{infinitely often}) = 1.
\end{align*}
\end{proposition}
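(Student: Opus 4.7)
The plan is to view $\{S_n\}$ as a centered random walk in $\bR^2$ with finite second moment and deduce neighborhood recurrence from the classical two-dimensional Chung--Fuchs theorem. Applying Lemma \ref{lp1} with $m = 1$ gives $\bE[\xi] = 0$ (since $1$ is not a multiple of $N \ge 3$), while the remark following it computes $\mathrm{Cov}(\xi) = \tfrac12|\alpha|^{2/N} I$, which is non-degenerate. Boundedness of $\xi$ trivializes all integrability hypotheses and makes the two-dimensional CLT available.

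The central analytic step is to invoke the Fourier criterion for recurrence in two dimensions,
\begin{align*}
\int_{|\lambda| < 1} \mathrm{Re}\,\frac{1}{1 - \psi_\xi(\lambda)}\, d\lambda = +\infty,
\end{align*}
where $\lambda$ now denotes a vector in $\bR^2$. The Taylor expansion $\psi_\xi(\lambda) = 1 - \tfrac14|\alpha|^{2/N}|\lambda|^2 + O(|\lambda|^3)$ shows that the integrand is of order $|\lambda|^{-2}$ near the origin, producing a divergent logarithmic integral in $\bR^2$. Chung--Fuchs then yields $\bP(|S_n| < \varepsilon\ \text{i.o.}) = 1$ for every $\varepsilon > 0$, i.e., neighborhood recurrence at the origin.

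To upgrade recurrence at $0$ to recurrence at an arbitrary lattice point $x = \sum_{k=0}^{N-1} c_k v_k$, with $v_k := \alpha^{1/N} e^{2 i\pi k/N}$, I would exploit the identity $\sum_k v_k = 0$ to rewrite $x$ with non-negative integer coefficients $c_k \ge 0$. The walk can then travel deterministically from $0$ to $x$ in $n_x := \sum_k c_k$ steps with probability at least $N^{-n_x} > 0$. Letting $\tau_j$ be successive times when $S_n$ enters $B(0, \varepsilon/2)$, separated by forced gaps of length $n_x$, the strong Markov property gives the uniform conditional lower bound $\bP(|S_{\tau_j + n_x} - x| < \varepsilon \mid \cF_{\tau_j}) \ge N^{-n_x}$ on $\{\tau_j < \infty\}$, and a conditional Borel--Cantelli argument finishes the proof.

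The step I expect to be most delicate is this transfer from recurrence at $0$ to recurrence at $x$: the paper's own calculation $\bP(S_{5m}=0) \sim \sqrt{5}/(2\pi m)^2$ shows that for $N \ge 5$ pointwise returns to $0$ occur only finitely often almost surely, so the classical lattice argument $\sum_n \bP(S_n = x) = \infty$ (which handles $N = 3, 4$) is unavailable. Working with small \emph{neighborhoods} of $x$ instead, and combining reachability of $x$ from $0$ in finitely many steps with conditional Borel--Cantelli, is precisely what bypasses this obstruction.
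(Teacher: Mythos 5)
Your proof is correct, but it reaches the conclusion by a genuinely different route than the paper. For recurrence at the origin the paper does not invoke the Chung--Fuchs Fourier criterion: it uses the CLT approximation $\tfrac{1}{\sqrt n}S_n\approx\cN(0,\tfrac12|\alpha|^{2/N}I)$ to estimate $\bP(|S_n|\le\varepsilon)\sim\varepsilon^2/n$ directly, concludes that $\sum_n\bP(|S_n|\le\varepsilon)=+\infty$, and then reproves by hand the implication ``divergence of this series implies neighborhood recurrence'' via Chung's disjoint-events argument: with $A_{m,k}=(|S_m|\le\varepsilon,\ |S_n|>\varepsilon\ \text{for all } n\ge m+k)$ one gets $\sum_m\bP(|S_m|\le\varepsilon)\,p_{2\varepsilon,k}(0)\le k$, forcing $p_{2\varepsilon,k}(0)=0$. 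Your appeal to the characteristic-function criterion, with the expansion $1-\psi_\xi(\lambda)=\tfrac14|\alpha|^{2/N}|\lambda|^2+O(|\lambda|^3)$, compresses both steps into a citation of a classical theorem; this is shorter but less self-contained and less quantitative, and note that the criterion as you state it (without the Abelian limit $r\uparrow1$) is really the Ornstein--Stone refinement --- although for a centered, genuinely two-dimensional walk with finite variance the plain Chung--Fuchs theorem already gives recurrence, so nothing is lost. Where your write-up genuinely adds value is the transfer from the origin to an arbitrary lattice point $x$: the paper's argument is carried out only for balls around $0$ (the roles of $x$, $z$ and $0$ are conflated there), whereas your combination of (i) rewriting $x$ with non-negative coefficients via $\sum_k v_k=0$, (ii) the uniform lower bound $N^{-n_x}$ for moving from $S_{\tau_j}$ to $S_{\tau_j}+x$ in $n_x$ steps, and (iii) the conditional Borel--Cantelli lemma along the stopping times $\tau_j$ supplies exactly the missing step. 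You are also right that this detour through neighborhoods is unavoidable: for $N\ge5$ the pointwise return probabilities are summable, so the usual lattice argument via $\sum_n\bP(S_n=x)=\infty$ is unavailable.
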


\begin{proof}
We first remark that the classical CLT apply to the sequence $\{\xi_k\}$
of random vectors in the plane, so that for large $n$ it holds
\begin{align*}
\frac{1}{\sqrt{n}} \, S_n \sim \cN\left(0, \frac12 \, |\alpha|^{2/N} \, I\right).
\end{align*}
\\
We compute
\begin{align*}
\bP(|S_n| \le \varepsilon) &\simeq \int_{B(0,\varepsilon\, n^{-1/2}) }
\cN\left(0, \frac12 \, |\alpha|^{2/N} \, I\right)
({\rm d}x)
\\
&= \int_{B(0,\varepsilon\, n^{-1/2})} \exp\left(-\frac{1}{|\alpha|^{2/N}}
|x|^2\right) \frac{{\rm d}x}{\pi \, |\alpha|^{2/N}}
\\
&= \int_0^{\varepsilon\, n^{-1/2}} 2 \pi \, \exp\left(-\frac{1}{
|\alpha|^{2/N}} \rho^2\right) \frac{\rho \,  {\rm d}\rho}{\pi \,
|\alpha|^{2/N}}
= |\alpha|^{2/N} \left( 1 -
\exp\left(-\frac{\varepsilon^2}{|\alpha|^{2/N}} \frac1n \right) \right)
\end{align*}
The right-hand side converges to 0, for large $n$, as follows
\begin{align*}
\lim_{n \to \infty} n \, |\alpha|^{2/N} \,\left[1 -
\exp\left(-\frac{\varepsilon^2}{|\alpha|^{2/N}} \frac1n \right)\right] =
\varepsilon^2;
\end{align*}
it follows that the series
\begin{align}\label{e:diverge}
\sum_{n=1}^{\infty} \bP(|S_n| \le \varepsilon) \sim \sum_{n=1}^\infty
\left[1 - \exp\left(-\frac{\varepsilon^2}{|\alpha|^{2/N}} \frac1n
\right)\right] = +\infty
\end{align}
diverges for all $\varepsilon > 0$.
\\
The last part of the proof is inspired by Chung \cite[Theorem 8.3.2]{Chung2001}.
Set for $z \in \bR^2$ and $\varepsilon > 0$
\begin{align*}
p_{\epsilon,n}(z) = \bP(|S_n - z| > \varepsilon \text{\ for all $n \ge m$});
\end{align*}
set $F$ the event $(|S_n - x| \le \varepsilon \quad \text{infinitely often})$
then 
\begin{align}
\label{e:diverge2}
F^c = \bigcup_{m \in \bN} (|S_n - z| > \varepsilon \text{\ for all $n \ge m$}) \quad \text{hence}\quad \bP(F^c) = \lim_{m \to \infty} p_{\varepsilon,m}(0).
\end{align}
Let $A_{m,k} = (|S_m| \le \varepsilon,\ |S_n| > \varepsilon \text{\ for all $n \ge m+k$})$; notice that
$A_{m,k}$ and $A_{m',k}$ are disjoint provided that $m' > m+k$, hence 
each sequence of events $\{A_{m,m+jk}\}_{j=0}^\infty$, for $m=1,\dots,k$, is made of disjoint events and 
\begin{align*}
\sum_{n=1}^\infty \bP(A_{n,k}) = \sum_{m=1}^k \sum_{j=0}^\infty \bP(A_{m,m+jk}) = \sum_{m=1}^k \bP\left( \bigcup_{j=0}^\infty A_{m,m+jk} \right) \le k.
\end{align*}
On the other hand, it holds
\begin{align*}
\sum_{n=1}^\infty \bP (|S_m| \le \varepsilon,\ |S_n| > \varepsilon &\text{\ for all $n \ge m+k$}) = \sum_{n=1}^\infty \bP (|S_m| \le \varepsilon,\ |S_n-S_m| > 2\varepsilon \text{\ for all $n \ge m+k$}) 
\\
&= \sum_{n=1}^\infty \bP (|S_m| \le \varepsilon) \, \bP( |S_n-S_m| > 2\varepsilon \text{\ for all $n \ge m+k$})
\\
&= \sum_{n=1}^\infty \bP (|S_m| \le \varepsilon) \, \bP( |S_{n-m}| > 2\varepsilon \text{\ for all $n-m \ge k$})
= \sum_{n=1}^\infty \bP (|S_m| \le \varepsilon) \, p_{2\varepsilon,k}(0)
\end{align*}
where we use that $S_n - S_m$ is independent of $S_n$ and it has the same distribution as $S_{n-m}$.
Therefore, we have proved that
\begin{align*}
\forall\,k \ge 1, \qquad \sum_{n=1}^\infty \bP (|S_m| \le \varepsilon) \, p_{2\varepsilon,k}(0) \le k
\end{align*}
and \eqref{e:diverge} implies that, for all $k\ge 1$, $p_{2\varepsilon,k}(0) = 0$. Recalling \eqref{e:diverge2} we get the thesis.
\end{proof}

\begin{remark}
With a similar computation, we see that for large $n$
\begin{equation}
\label{e7}
\bP(|\tilde S_n| > \varepsilon) \sim
\exp\left(-\frac{\varepsilon^2}{|\alpha|^{2/N}} n^{\frac2N-1} \right)
\end{equation}
which converges to 1 since $N > 2$.
\end{remark}

\noindent
\begin{minipage}{.45\textwidth}
\begin{center}
\includegraphics[width=\textwidth]{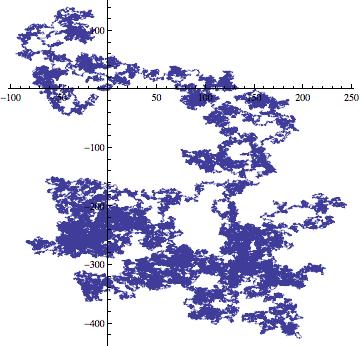}
\end{center}
{\footnotesize A simulation for the path of $S_n$ in case $N=5$.}
\end{minipage}
\hskip.08\textwidth\begin{minipage}{.45\textwidth}
\begin{center}
\includegraphics[width=\textwidth]{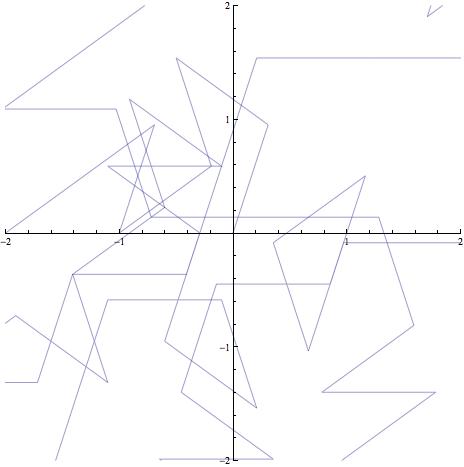}
\end{center}
{\footnotesize The same simulation; an inspection of a neighborhood of the
origin.}
\end{minipage}
\par\vskip 1\baselineskip\noindent

We provide a description of the symmetry properties of the random walk $S_n$, inherited by those of the set $R(N)=\{e^{2 i \pi k/N},\ k=0,1,\dots, N-1\}$.
\begin{proposition}
Let $U:\bC\to \bC$ be a linear map such that $U(R(N))=R(N)$. Then $U$ is a symmetry  transformation for $S_n$, i.e 
$$
\bP(S_n=z)=\bP(S_n=U(z)),\qquad z\in \bC.
$$
\end{proposition}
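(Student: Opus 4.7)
The plan is to reduce the claim to the invariance of the distribution of the single-step increment $\xi$ and then use independence, which mirrors the classical argument for symmetries of sums of i.i.d.\ random variables.

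First I would unpack the hypothesis $U(R(N))=R(N)$. Since $U$ is linear and bijectively permutes the $N$-th roots of unity, one gets $U(\alpha^{1/N} R(N))=\alpha^{1/N} R(N)$ as a consequence (here the cleanest reading is that $U$ is $\bC$-linear, hence multiplication by an element of $R(N)$, so it commutes with the scalar $\alpha^{1/N}$; under an $\bR$-linear reading one argues the same way when $\alpha$ is chosen so that $\alpha^{1/N}R(N)$ is a rotated/scaled copy invariant under the symmetry). In either reading, $U$ permutes the support $\alpha^{1/N}R(N)$ of the step distribution. Because $\xi$ is uniform on this finite set and $U$ restricts to a bijection of this set onto itself, the random variable $U(\xi)$ is again uniform on $\alpha^{1/N}R(N)$, so
\begin{equation*}
U(\xi) \stackrel{d}{=} \xi.
\end{equation*}

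Next I would lift this to the random walk. Applying $U$ componentwise and using linearity,
\begin{equation*}
U(S_n) \;=\; U\Bigl(\sum_{j=1}^n \xi_j\Bigr) \;=\; \sum_{j=1}^n U(\xi_j).
\end{equation*}
Because $(\xi_j)_{j=1}^n$ are independent and $U$ is a deterministic map, $(U(\xi_j))_{j=1}^n$ is again an independent family, each term equidistributed with $\xi$ by the previous step. Hence $U(S_n)$ is a sum of $n$ i.i.d.\ copies of $\xi$, which is the very definition of $S_n$, giving $U(S_n)\stackrel{d}{=}S_n$.

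Finally, since $U$ is a linear bijection of $\bC$, the event $\{S_n=z\}$ coincides with the event $\{U(S_n)=U(z)\}$, so
\begin{equation*}
\bP(S_n=z) \;=\; \bP(U(S_n)=U(z)) \;=\; \bP(S_n=U(z)),
\end{equation*}
where the last equality uses the equality in distribution established above. I expect the only delicate point is the first step: one must be careful about whether ``linear'' means $\bC$-linear or $\bR$-linear, because under $\bR$-linearity the factor $\alpha^{1/N}$ is not pulled outside $U$ for free, and one has to invoke $U(R(N))=R(N)$ together with the specific geometry of the support $\alpha^{1/N}R(N)$ to recover invariance. Once this is pinned down, the rest of the argument is a routine application of independence and the change-of-variable principle for bijective maps.
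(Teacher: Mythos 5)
Your proof is correct, and it reaches the conclusion by a different organization of the argument than the paper. The paper proceeds by induction on $n$, conditioning on the last step: writing $\bP(S_n=z)$ as a sum over the possible final increments, applying the inductive hypothesis to $S_{n-1}$, using linearity of $U$ to rewrite $U(z-e^{2\pi i j/N})=U(z)-U(e^{2\pi i j/N})$, and re-indexing the sum via the permutation of $R(N)$ induced by $U$. You instead push the whole product law forward at once: $U(\xi)\stackrel{d}{=}\xi$ because $U$ permutes the (uniform) support of the step distribution, linearity gives $U(S_n)=\sum_j U(\xi_j)$, independence is preserved under deterministic maps, and bijectivity of $U$ converts $U(S_n)\stackrel{d}{=}S_n$ into the pointwise identity $\bP(S_n=z)=\bP(S_n=U(z))$. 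The two arguments rest on the same two ingredients (additivity of $U$ and invariance of the step set), but yours avoids the induction and makes the logical dependencies more transparent; in particular you correctly note that one needs $U$ to be injective for the last step, and, more importantly, you isolate the genuine subtlety that the paper glosses over: the hypothesis $U(R(N))=R(N)$ only yields invariance of the actual step set $\alpha^{1/N}R(N)$ if $U$ is $\bC$-linear (hence multiplication by an element of $R(N)$), whereas for a merely $\bR$-linear $U$ (e.g.\ complex conjugation) and generic complex $\alpha$ the step set need not be preserved. The paper's one-step decomposition silently drops the factor $\alpha^{1/N}$ (and the weights $1/N$), so it is implicitly working under the reading you identify as the correct one. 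Your flagging of this point is a genuine improvement in precision.
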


\begin{proof}
The result can be proved by induction on $n$:
for $n=1$ the result follows by the assumption $U(R(N))=R(N)$;
for general $n$, by using the inductive hypothesis, one can write:
\begin{eqnarray}
\bP(S_n=z)&=&\sum_{j=1}^N\bP(S_{n-1}=z-e^{i\frac{2\pi}{N}j})=
\sum_{j=1}^N\bP(S_{n-1}=U(z)-U(e^{i\frac{2\pi}{N}j}))\nonumber\\
&=&\sum_{j=1}^N\bP(S_{n-1}=U(z)-e^{i\frac{2\pi}{N}j})=\bP(S_n=U(z)).\nonumber
\end{eqnarray}
\end{proof}

\section{A family of complex jump processes}\label{sez4}

In case $N=2$, the limit of the random walk $\tilde S_n$ is a Wiener process; to be precise, since in the definition of $\tilde S_n$ no time is involved, it converges to the Wiener process at time $t=1$.
It is possible to extend this result to general times; however, it is not possible to talk about the limit process in case $N > 2$.
In this section, we construct a family of random walks $W_n(t)$ that generalizes, in a suitable sense, $\tilde S_n$ to a continuous time process, taking into account the limit expressed in Theorem \ref{t1}.

Let us start by considering the time interval $[0,1]$. Let $\{\xi_j\}$ be a sequence of independent copies of the random variable $\xi$ defined in \eqref{e2}.
Then for any $n \in \bN$ we set
\begin{align}
X_n(0) &= 0;\nonumber \\
X_n(\tfrac{k}{n}) &=  \frac{1}{n^{1/N}} \sum_{j=1}^k \xi_j, \qquad k=1,\dots, n,\label{Xn} \\
X_n(t) &= X_n(\tfrac{k}{n}), \qquad  t \in [\tfrac{k}{n}, \tfrac{k+1}{n}).\nonumber
\end{align}
Let us extend now $X_n(t)$ to a process $W_n(t)$ for values of $t\in (-\infty,+\infty)$. For $t > 0$ we set $\lfloor t \rfloor$ the integer part of $t$ and
%$t\in [0,L]$ let
\begin{align}
W_n(t)&= X^{(1)}_n(t),\qquad &t\in [0,1], \nonumber \\
W_n(t)&= W_n(1)+X^{(2)}_n(t- 1),\qquad &t\in [1,2], \label{Wn}  \\
\intertext{and, in general,}
W_n(t)&= %W_n(L)+X^{(L+1)}_n(t-L)= 
W_n(\lfloor t \rfloor)+X^{(\lfloor t \rfloor + 1)}_n(t-\lfloor t \rfloor), &t \ge 0, \nonumber
\end{align}
while for negative times we set (with the convention that $\lfloor -t \rfloor = - \lfloor t \rfloor$)
\begin{align}
W_n(-t)&= e^{i\pi/N} X^{(-1)}_n(t), & t\in [0,1], \nonumber \\
W_n(-t)&= W_n(-1)+e^{i\pi/N}X^{(-2)}_n(t-1), & t\in [1,2], \label{Wnbis}\\
\intertext{and, in general,}
W_n(-t)&= %W_n(-L)+e^{i\pi/N}X^{(-L-1)}_n(-t-L) = 
W_n(\lfloor -t \rfloor) + e^{i \pi / N} X^{( \lfloor -t \rfloor - 1)}(\lfloor -t \rfloor-(-t)), & t \ge 0, \nonumber 
\end{align}
where $X^{(1)}_n,X^{(2)}_n, \dots ,X^{(-1)}_n,X^{(-2)}_n, \dots$ are i.i.d.\ copies of $X_n$ in \eqref{Xn}.
We remark that $W_n$ can be seen as the extension to continuous time of the random walk $\{\tilde S_n\}$, since we have the following identity for the laws
\begin{align}\label{eq:W=S}
W_n(t) \stackrel{\cL}{=} \left( \frac{\lfloor n t \rfloor}{n} \right)^{1/N} \tilde S_{\lfloor n t \rfloor}, \qquad \text{for $t > 0$.}
\end{align}

The sequence of processes $\{W_n\}$ shall converge in a very weak sense %, due to Theorem \ref{t1}, 
to a sort of $N$-stable process (which, we note again, does not really exist for $N > 2$).
The result is analog to what is proved in Theorem \ref{t1} for the normalized random walk $\tilde S_n$.

\begin{theorem}
\label{t2}
For any $t \in (-\infty,+\infty)$ and $\lambda \in \bC$,

\begin{equation}
\label{e5}
\lim_{n \to \infty} \bE[\exp(i \lambda W_n(t))] = \exp\left(i^N \frac{\lambda^N}{N!} \alpha t\right).
\end{equation}
\end{theorem}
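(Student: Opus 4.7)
The plan is to use the identity~\eqref{eq:W=S} to reduce the problem to a slightly uniform version of Theorem~\ref{t1}. For $t>0$, set $k=k(n):=\lfloor nt\rfloor$; then~\eqref{eq:W=S} gives
\begin{align*}
\bE[\exp(i\lambda W_n(t))] = \bE\!\left[\exp\!\left(i\lambda (k/n)^{1/N}\tilde S_k\right)\right] = \psi_k(\mu_n),
\end{align*}
where $\mu_n := \lambda (k/n)^{1/N}$ and $\psi_k$ denotes the characteristic function of $\tilde S_k$. Since $k\to\infty$ and $\mu_n\to \lambda t^{1/N}$, to conclude it suffices to know that $\psi_k(\mu)\to \exp(i^N\alpha\mu^N/N!)$ uniformly for $\mu$ in a compact neighbourhood of $\lambda t^{1/N}$, for then one may substitute $\mu=\mu_n$ and use $\mu_n^N\to\lambda^N t$.

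For this uniform strengthening I would repeat the Taylor expansion already carried out in the proof of Theorem~\ref{t1}. Writing $\psi_k(\mu)=(\bE[\exp(i\mu\xi/k^{1/N})])^k$, the trigonometric identity $\sum_{j=0}^{N-1}e^{2\pi ijm/N} = N\cdot\mathbf{1}_{N\mid m}$ ensures that, when we expand the exponential and take expectation, only terms with $m$ divisible by $N$ contribute, giving
\begin{align*}
\bE[\exp(i\mu\xi/k^{1/N})] = 1 + \frac{i^N\alpha\mu^N}{N!\,k} + O(k^{-2}),
\end{align*}
with the $O(k^{-2})$ remainder bounded uniformly for $\mu$ in any bounded set. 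Taking logarithms and multiplying by $k$ then gives $k\log\bE[\exp(i\mu\xi/k^{1/N})]\to i^N\alpha\mu^N/N!$ uniformly on compact sets, which is the desired strengthening.

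The case $t=0$ is trivial since $W_n(0)=0$. For $t<0$, applying the definition~\eqref{Wnbis} in the same fashion as~\eqref{eq:W=S} was deduced for positive times yields the analog
\begin{align*}
W_n(t)\stackrel{\cL}{=}e^{i\pi/N}(\lfloor n|t|\rfloor/n)^{1/N}\tilde S_{\lfloor n|t|\rfloor},
\end{align*}
and the same argument, now with $\tilde\mu_n := \lambda e^{i\pi/N}(\lfloor n|t|\rfloor/n)^{1/N}$, gives $\tilde\mu_n^N\to \lambda^N e^{i\pi}|t|=\lambda^N t$, reproducing~\eqref{e5} with the correct sign (here one uses crucially that the extra factor $e^{i\pi/N}$ in~\eqref{Wnbis} is precisely an $N$-th root of $-1$, so that raising to the $N$-th power turns $|t|$ into $t$). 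The one genuine obstacle in the whole argument is obtaining the uniformity of the $O(k^{-2})$ remainder; once that bookkeeping is done, everything else is a direct substitution.
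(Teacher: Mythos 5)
Your argument is correct and follows essentially the same route as the paper's own proof in the Appendix: both reduce via the identity \eqref{eq:W=S} to the Taylor expansion of $\bE[e^{i\mu\xi/k^{1/N}}]$ already carried out for Theorem \ref{t1}, and both treat negative times through the factor $e^{i\pi/N}$, an $N$-th root of $-1$. The only cosmetic difference is that you package the substitution $\mu=\mu_n\to\lambda t^{1/N}$ as a locally uniform version of Theorem \ref{t1}, whereas the paper expands $\lfloor nt\rfloor\log\bE[e^{i\lambda\xi/n^{1/N}}]$ directly and tracks the factor $\lfloor nt\rfloor/n\to t$.
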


For clearness of exposition, we postpone the proofs of this and next result to the appendix, since their details are not required for a comprehension of the sequel.
However, let us give a heuristic idea: for large $n$ it holds $\frac{\lfloor n t \rfloor}{n} \simeq t$, therefore we expect that
\begin{align*}
\bE[e^{i \lambda W_n(t)}] = \bE[ e^{i \lambda \left( \frac{\lfloor n t \rfloor}{n} \right)^{1/N} \tilde S_{\lfloor n t \rfloor} } ]  \sim \bE[ e^{i \lambda t^{1/N} \tilde S_{\lfloor n t \rfloor}}] \sim \exp\left(i^N \frac{(\lambda t^{1/N})^N}{N!} \alpha \right)
\end{align*}
that is \eqref{e5}.
In next result, as we did for the random walk $\{S_n\}$, we  study the error term in the convergence proved above.

\begin{lemma}
\label{lemmat2}
For any $t \in  (-\infty,+\infty)$,  one has
\begin{align*}
\bE[\exp(i \lambda W_n(t))] - \exp\left(i^N \frac{\lambda^N}{N!} \alpha t\right) = f_n(t) + g_n(t),
\end{align*}
where for $n\to\infty$
\begin{align*}
f_n(t) &\sim \frac{\alpha^2 |t|}{n} (i \, \lambda)^{2N}  \exp\left(i^N \frac{\lambda^N}{N!} \alpha t\right)\left(\frac{1}{(2N)!}-\frac{1}{2(N!)^2}\right)
\end{align*}
and
\begin{align*}
|g_n(t)| &\le \frac{1}{n}\frac{|\alpha \lambda^N|}{N!} \left| \exp\left(i^N \frac{\lambda^N}{N!} \alpha t\right) \right|.
\end{align*}
\end{lemma}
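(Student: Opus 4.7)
The strategy is to reduce everything to the scalar statement of Theorem \ref{theo4} via the distributional identity \eqref{eq:W=S}. For $t>0$ set $m=\lfloor nt\rfloor$ and $\mu=\lambda(m/n)^{1/N}$: then $\bE[\exp(i\lambda W_n(t))]=\psi_m(\mu)$, where $\psi_m$ is the characteristic function of $\tilde S_m$. For $t<0$, the construction \eqref{Wnbis} gives the analogous identity with $m=\lfloor n|t|\rfloor$ and $\mu=\lambda e^{i\pi/N}(m/n)^{1/N}$, because each negative-time increment carries the phase $e^{i\pi/N}$. In both cases one computes $\mu^N=\epsilon\lambda^N m/n$ and $\mu^{2N}=\lambda^{2N}(m/n)^2$, with $\epsilon=\mathrm{sgn}\,t\in\{+1,-1\}$, so that $\epsilon m/n\to t$ as $n\to\infty$.

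I then split
\begin{align*}
\bE[e^{i\lambda W_n(t)}]-e^{i^N\alpha\lambda^N t/N!}=\bigl[\psi_m(\mu)-e^{i^N\alpha\mu^N/N!}\bigr]+\bigl[e^{i^N\alpha\mu^N/N!}-e^{i^N\alpha\lambda^N t/N!}\bigr]
\end{align*}
and identify the first bracket with $f_n(t)$ and the second with $g_n(t)$. Applying Theorem \ref{theo4} (with $n$ replaced by $m$) to the first summand gives
\begin{align*}
\psi_m(\mu)-e^{i^N\alpha\mu^N/N!}\sim\frac{(-1)^N}{m}\Bigl(\frac{1}{(2N)!}-\frac{1}{2(N!)^2}\Bigr)\alpha^2\mu^{2N}\,e^{i^N\alpha\mu^N/N!}.
\end{align*}
Substituting $\mu^{2N}/m=m\lambda^{2N}/n^2\sim|t|\lambda^{2N}/n$ and $e^{i^N\alpha\mu^N/N!}\to e^{i^N\alpha\lambda^N t/N!}$, and using $(-1)^N\lambda^{2N}=(i\lambda)^{2N}$, one recovers exactly the claimed asymptotic for $f_n(t)$. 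The $|t|$ in the statement arises precisely from the fact that $\mu^{2N}$ is insensitive to the sign factor $\epsilon$ (while $\mu^N$ is not, which is how the leading exponential acquires the correct value $e^{i^N\alpha\lambda^N t/N!}$ for both signs of $t$).

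For the second summand, note $\mu^N-\lambda^N t=\lambda^N(\epsilon m/n-t)$, so $|\mu^N-\lambda^N t|\le|\lambda|^N/n$. The elementary bound $|e^a-e^b|\le|a-b|\max(|e^a|,|e^b|)$ together with a first-order Taylor expansion yields
\begin{align*}
|g_n(t)|\le\frac{1}{n}\,\frac{|\alpha\lambda^N|}{N!}\bigl|e^{i^N\alpha\lambda^N t/N!}\bigr|+O(1/n^2),
\end{align*}
which is the required estimate (the $O(1/n^2)$ remainder is harmless).

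The main technical point is to justify that the asymptotic of Theorem \ref{theo4} remains valid when the argument $\mu$ varies with $n$, since its statement is only pointwise in $\lambda$. This is not automatic but follows from the structure of its proof, which is obtained by expanding $n\log\psi_\xi(\mu/n^{1/N})$ in powers of $\mu/n^{1/N}$ up to order $2N+1$: that expansion is locally uniform in $\mu$, and our $\mu$ stays in the bounded set $\{\lambda(m/n)^{1/N}:m\le nT\}$ for any finite time horizon $T$. Once this uniformity is granted, the rest is careful bookkeeping of the phase $e^{i\pi/N}$ for negative $t$, which is designed so that $(e^{i\pi/N})^N=-1$ flips the sign of $\mu^N$ while $(e^{i\pi/N})^{2N}=1$ leaves $\mu^{2N}$ invariant.
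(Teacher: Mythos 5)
Your proof is correct, but it is organized differently from the paper's. The paper derives a single master expansion
\begin{equation*}
\bE[e^{i\lambda W_{n}(t)}]
= \exp \Bigl\{ \tfrac{\lfloor n t \rfloor}{n}\tfrac{i^N \lambda^N \alpha}{N! } + \tfrac{\lfloor n t \rfloor}{n^2}\bigl(\tfrac1{(2N)! } - \tfrac{1}{2 (N!)^2 }\bigr) i^{2N} \lambda^{2N} \alpha^2 + O(\tfrac{1}{n^2}) \Bigr\}
\end{equation*}
by expanding $\lfloor nt\rfloor\log\psi_\xi(\lambda/n^{1/N})$ directly, then factors out $e^{i^N\alpha\lambda^N t/N!}$ and reads off $g_n$ as the first-order term coming from $\lfloor nt\rfloor/n-t$ and $f_n$ as the $1/n$-order term (Theorem \ref{theo4} is then the $t=1$ case of the same expansion). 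You invert the logical order: you take Theorem \ref{theo4} as input and transport it to $W_n(t)$ via the law identity \eqref{eq:W=S} with the rescaled argument $\mu=\lambda(\mathrm{sgn\ phase})(m/n)^{1/N}$, splitting at the intermediate exponential $e^{i^N\alpha\mu^N/N!}$. This is more modular and makes transparent both why $|t|$ rather than $t$ appears in $f_n$ (since $\mu^{2N}$ forgets the phase $e^{i\pi/N}$ while $\mu^N$ does not) and how negative times are handled; the price is that you must upgrade Theorem \ref{theo4} from a pointwise statement to one that is uniform for arguments in a compact set, since $\mu$ varies with $n$ --- you correctly flag this, and it does follow from the proof of the expansion, but it is an extra step the paper's direct route avoids. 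Two small points: your $g_n$ is the exact difference of two exponentials rather than its first-order Taylor term, so the stated bound holds only up to a factor $1+O(1/n)$, and the $O(1/n^2)$ excess should be explicitly absorbed into $f_n$ (harmless, since $f_n\sim c|t|/n$ with $c=\alpha^2(i\lambda)^{2N}(\tfrac{1}{(2N)!}-\tfrac{1}{2(N!)^2})e^{i^N\alpha\lambda^N t/N!}\neq 0$ for $t\neq 0$, and the case $t=0$ is trivial); and the inequality $|e^a-e^b|\le |a-b|\max(|e^a|,|e^b|)$ for complex $a,b$ deserves a one-line justification via $e^a-e^b=\int_0^1 e^{b+s(a-b)}(a-b)\,ds$ and the monotonicity of $s\mapsto |e^{b+s(a-b)}|$.
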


As stated above, we postpone the proof of this result to the appendix; here we record the following  direct consequence of the lemma.
\begin{corollary}\label{cor1}
For any $\epsilon>0$ there exists a $n_\epsilon\in\bN$ such that for $n> n_\epsilon$:
\begin{align*}
\left|\bE[\exp(i \lambda W_n(t))] - \exp\left(i^N \frac{\lambda^N}{N!} \alpha t\right)\right| <(1+\epsilon) \frac{K(t,\alpha,\lambda)}{n},
\end{align*}
where
\begin{align}\label{K}
K(t,\alpha,\lambda)= \left| \exp\left(i^N \frac{\lambda^N}{N!} \alpha t\right) \right| \left(\frac{|\alpha \lambda^N|}{N!}+|\alpha^2 t\lambda^{2N}|\left(\frac{1}{2(N!)^2}-\frac{1}{(2N)!}\right)\right).
\end{align}
\end{corollary}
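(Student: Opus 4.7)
The proof is essentially bookkeeping on top of Lemma \ref{lemmat2}, which already supplies a decomposition of the error into a leading term $f_n(t)$ with known asymptotics and a remainder $g_n(t)$ with an explicit bound. My plan is first to apply the triangle inequality to the identity
\[
\bE[\exp(i \lambda W_n(t))] - \exp\!\left(i^N \tfrac{\lambda^N}{N!} \alpha t\right) = f_n(t) + g_n(t),
\]
so that it suffices to bound $|f_n(t)|$ and $|g_n(t)|$ separately by $\tfrac{1}{n}$ times the corresponding pieces of $K(t,\alpha,\lambda)$ in \eqref{K}.

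For $g_n(t)$ the lemma already gives the desired bound $\tfrac{1}{n} \tfrac{|\alpha \lambda^N|}{N!}\,|\exp(i^N \tfrac{\lambda^N}{N!}\alpha t)|$, which matches the first summand of $K(t,\alpha,\lambda)$ exactly, with no $\epsilon$ needed. For $f_n(t)$, I use the asymptotic equivalence: noting $|(i\lambda)^{2N}|=|\lambda|^{2N}$ and observing that $\frac{1}{2(N!)^2}-\frac{1}{(2N)!}>0$ for $N\ge 2$ (since $\binom{2N}{N}\ge 2$, i.e.\ $(2N)! \ge 2(N!)^2$), the asymptotic expression for $f_n(t)$ has absolute value
\[
\frac{|\alpha^2 t \lambda^{2N}|}{n}\,\left|\exp\!\left(i^N \tfrac{\lambda^N}{N!}\alpha t\right)\right|\left(\frac{1}{2(N!)^2}-\frac{1}{(2N)!}\right),
\]
which is precisely the second summand of $K(t,\alpha,\lambda)/n$. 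The definition of $\sim$ in the lemma then guarantees, for any $\epsilon>0$, the existence of $n_\epsilon \in \bN$ such that for all $n>n_\epsilon$ the quantity $|f_n(t)|$ is bounded by $(1+\epsilon)$ times the displayed expression.

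Combining the two bounds and factoring out $(1+\epsilon)\ge 1$ in front of both (which is harmless since $|g_n(t)|$ already satisfies the stronger bound without the $(1+\epsilon)$ factor) yields
\[
\left|\bE[\exp(i \lambda W_n(t))] - \exp\!\left(i^N \tfrac{\lambda^N}{N!} \alpha t\right)\right| \le (1+\epsilon)\,\frac{K(t,\alpha,\lambda)}{n}
\]
for $n>n_\epsilon$, which is the claim. There is no real obstacle here; the only subtlety worth flagging is verifying the positivity of $\frac{1}{2(N!)^2}-\frac{1}{(2N)!}$ so that the absolute value can be dropped when matching the asymptotic of $f_n(t)$ to the formula for $K(t,\alpha,\lambda)$.
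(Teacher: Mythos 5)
Your proof is correct and follows exactly the route the paper intends: the corollary is stated there as a ``direct consequence'' of Lemma \ref{lemmat2}, obtained by the triangle inequality applied to the decomposition into $f_n(t)$ and $g_n(t)$, with the $(1+\epsilon)$ factor absorbing the asymptotic equivalence for $f_n$ and the exact bound on $g_n$ giving the first summand of $K(t,\alpha,\lambda)$. Your remark on the positivity of $\frac{1}{2(N!)^2}-\frac{1}{(2N)!}$ (equivalently $\binom{2N}{N}>2$ for $N\ge 2$) is a worthwhile check that the paper leaves implicit.
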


\section{Solution of higher order PDEs}\label{sez5}

In this section we show that the weak limit in distribution proved in Theorem \ref{t2} is equivalent to a representation formula for the solution of a $N$-order equation with possibly complex coefficients.
\\
Specifically, we are concerned with the following complex valued parabolic PDE
\begin{equation}
\begin{aligned}
&\frac{\partial}{\partial t}u(t,x) = \frac{\alpha}{N!} \frac{\partial^N}{\partial x^N}u(t,x), %\qquad &t \in [0,T],
\\
&u(t_0,x) = f(x), \qquad \phantom{\frac{\partial^N}{\partial x^N}}&x \in \bR.
\end{aligned}
\label{e6}
\end{equation}
We are going to show that for a suitable class of initial datum $f$, the limit
\begin{equation}
\label{e8}
u(t,x) = \lim_{n \to \infty} \bE[f(x + W_n(t-t_0))]
\end{equation}
is well defined for any $x \in \bR$ and $t$ in a suitable neighborhood of $t_0$, and it provides a representation for the solution of \eqref{e6}.

\begin{hypothesis}
\label{h1}
The class of admissible initial datum is defined by the (complex-valued) function $f(x)$, defined for $x \in \bR$, of the form
\begin{align*}
f(x) = \int_{\bR} e^{i x y} \, {\rm d}\mu(y),
\end{align*}
where $\mu$ is a measure of bounded variation on $\bR$ satisfying the following assumptions:
\begin{enumerate}
\item[1.] $\displaystyle \int_{\bR} |e^{i x z}| \, {\rm d}|\mu|(x) < \infty$ for all $z \in \bC$,
\item[2.] there exists a time interval $(T_1,T_2)$, with $T_1 < t_0 < T_2 \in \bR$, such that
\begin{align*}
 \int_{\bR} \left|\exp\left(i^N \alpha  \frac{x^N}{N!}(t-t_0))\right)\right| \, {\rm d}|\mu|(x) < \infty
 \end{align*}
  for all $t \in (T_1,T_2)$.
\end{enumerate}
In the conditions above, $|\mu|$ stands for the total variation of $\mu$. In order to emphasize the dependence on time, we shall write the class of initial data by $D(T_1,T_2)$.
\\
We shall also require, sometimes, the following additional assumption 
\begin{enumerate}
\item[3.] $\displaystyle \int_{\bR} \left|x^{N} \, \exp\left(i^N \alpha  \frac{x^N}{N!}(t-t_0)\right) \right| \, {\rm d}|\mu|(x) < \infty$ for all $t \in (T_1,T_2)$.
\end{enumerate}
\end{hypothesis}

\medskip

\begin{remark}
Condition 1.\ assures that the function $f$ can be extended to the complex plane in the following way:
\begin{align*}
f(z) \equiv \int_{\bR} e^{i z y} \, {\rm d}\mu(y), \qquad z \in \bC,
\end{align*}
since the integral above is absolutely convergent.
Therefore, we may give a meaning to $f(x + W_n(t))$, where $W_n$ denotes  the random process  introduced above.
Notice that the computation in \eqref{e7} implies that $f$ shall be extended to the {\em whole} complex plane in order to ensure that $f(x + W_n(t))$ is well defined.
\end{remark}

\begin{remark}\label{rem2}
Condition 2.\ depends on time and, in particular, on $\alpha$. This is because we do not give any condition on $\alpha$ and, therefore, the solution may explode in finite time.
However, the length of the interval $(T_1,T_2)$ is independent of $t_0$, so that our construction is invariant under time shifts. In particular, with no loss of generality we can choose $t_0 = 0$. 
\\
If the condition is satisfied, then the function $x \mapsto u(t,x)$ is well defined for all $x \in \bR$ and continuous.
\\
We cannot control the supremum norm of $u(t,x)$ with that of $u(0,x) = f(x)$ unless the stochastic process is concentrated on the real line, which happens in the case $N=2$ and $\alpha \in \bR_+$.
\\
Indeed, for $N>2$ the solutions of equation \eqref{e6} do not satisfy a maximum principle.
\end{remark}

We can now state our main result.

\begin{theorem}\label{teo10}
Suppose that $f \in D(T_1,T_2)$ is an admissible initial datum that satisfies conditions 1.\ and 2.\ in Hypothesis \ref{h1} above, for $T_1 < t_0 < T_2$.
The function $u(t,x)$ defined in \eqref{e8} is a representation of the solution of the parabolic problem \eqref{e6} for any time $t \in (T_1,T_2)$ in the sense that 
\begin{equation}\label{solHoc}
u(t,x)= \int_{\bR} e^{i\, x\, y} \exp\left(\frac{i^N \alpha}{N!} (t -t_0)y^N \right) \, {\rm d}\mu(y)
\end{equation}
and the integral in \eqref{solHoc} is absolutely convergent.

Suppose further that condition 3.\ in Hypothesis \ref{h1} is satisfied. Then the solution $u(t,x)$ is a classical solution for the problem \eqref{e6}.
\end{theorem}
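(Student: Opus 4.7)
I would combine a Fubini decomposition of the expectation with the characteristic-function convergence of Theorem~\ref{t2}, then read the PDE off the resulting integral representation by differentiating under the integral sign. To begin, substituting $f(z)=\int_{\bR} e^{izy}\,d\mu(y)$ into $\bE[f(x+W_n(t-t_0))]$ and swapping $\bE$ with $\int$ via Fubini, I would get
\[
\bE[f(x+W_n(t-t_0))] = \int_{\bR} e^{ixy}\,\bE\bigl[e^{iy W_n(t-t_0)}\bigr]\,d\mu(y).
\]
The interchange is legitimate because for each fixed $n$ the random variable $W_n(t-t_0)$ is almost surely bounded by a constant $M_n$ (it is a finite sum of quantities of modulus $|\alpha|^{1/N}n^{-1/N}$), so Hypothesis~\ref{h1}(1) supplies the $|\mu|$-integrable majorant $e^{|y|(|x|+M_n)}$.

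Next I would pass to the limit $n\to\infty$ inside the integral. Theorem~\ref{t2} yields pointwise convergence of the integrand to $e^{ixy}\exp(i^N\alpha y^N(t-t_0)/N!)$, and Corollary~\ref{cor1} gives the quantitative bound
\[
\bigl|\bE[e^{iy W_n(t-t_0)}]\bigr| \le \bigl|\exp(i^N\alpha y^N(t-t_0)/N!)\bigr| + (1+\epsilon)K(t-t_0,\alpha,y)/n
\]
for $n\ge n_\epsilon$. The plan is to convert this into a $|\mu|$-integrable dominating function by selecting an intermediate time $t'$ strictly between $t_0$ and $t$, which is possible because $(T_1,T_2)$ is open: the polynomial factors $|y|^N$ and $|y|^{2N}$ appearing in $K$ are then absorbed by the extra exponential decay of $|\exp(i^N\alpha y^N(t-t_0)/N!)|$ relative to $|\exp(i^N\alpha y^N(t'-t_0)/N!)|$, and condition~2 at $t'$ produces a majorant in $L^1(|\mu|)$. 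Lebesgue dominated convergence then yields \eqref{solHoc}; absolute convergence of that integral is immediate from $|e^{ixy}|=1$ together with condition~2 at $t$.

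For the classical-solution part, under the additional condition~3, I would differentiate \eqref{solHoc} under the integral sign: one $\partial_t$ produces the factor $i^N\alpha y^N/N!$, while $N$ successive $\partial_x$ produce $(iy)^N = i^N y^N$, so $\partial_t u$ and $(\alpha/N!)\partial_x^N u$ are both represented by the same absolutely convergent integral. Condition~3 directly provides the $L^1(|\mu|)$ majorant needed to justify differentiation under the integral and gives continuity of the resulting derivatives in $(t,x)$ for $t\in(T_1,T_2)$; the initial condition $u(t_0,x)=f(x)$ follows by setting $t=t_0$ in \eqref{solHoc}. The main obstacle is the dominated-convergence step above: $K(t-t_0,\alpha,y)$ contains factors $|y|^N$ and $|y|^{2N}$ not controlled by condition~2 alone, and the trick of trading some decay at $t$ for integrability at an intermediate $t'$ relies on $|\exp(i^N\alpha y^N s/N!)|$ being super-polynomially small in $|y|$ whenever it is $|\mu|$-integrable, which is implicitly guaranteed by condition~2 holding on an open interval of times.
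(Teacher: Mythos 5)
Your proof follows the same route as the paper's: write $f$ as the Fourier transform of $\mu$, interchange $\bE$ and $\int$ (the paper does this even more simply by noting that $W_n(t)$ takes only finitely many values, so the interchange is a finite sum), pass to the limit under the integral sign via dominated convergence and Theorem \ref{t2}, and obtain the classical solution by differentiating \eqref{solHoc} under the integral using condition 3. The interchange and the classical-solution steps are sound.

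The gap is in the construction of the dominating function, which is precisely the step you flag as the main obstacle. Two problems. First, Corollary \ref{cor1} is a pointwise statement in $\lambda$: the threshold $n_\epsilon$ depends on $(t,\alpha,\lambda)$, so for a fixed $n$ the inequality $|\bE[e^{iyW_n(t-t_0)}]|\le|\exp(i^N\alpha y^N(t-t_0)/N!)|+(1+\epsilon)K/n$ holds only for $y$ in an $n$-dependent set; it cannot be integrated against $|\mu|$ to produce a single $n$-independent majorant without first establishing uniformity of $n_\epsilon$ in $\lambda$, which neither Corollary \ref{cor1} nor its proof (an asymptotic expansion with $\lambda$-dependent error constants) provides. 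Second, your justification for absorbing the factors $|y|^{N}$ and $|y|^{2N}$ of $K$ --- that condition 2 on an open time interval forces $|\exp(i^N\alpha y^N s/N!)|$ to be super-polynomially small --- is false: when $\mathrm{Re}(i^N\alpha y^N)\equiv 0$ (e.g.\ $N$ odd and $\alpha\in\bR$) this modulus is identically $1$ and condition 2 reduces to $\|\mu\|<\infty$, with no decay to trade. (In that case the polynomial factors are still $|\mu|$-integrable, but because of condition 1, which forces all exponential moments of $|\mu|$, not because of condition 2.) Note that the paper does not route the proof of Theorem \ref{teo10} through Corollary \ref{cor1} at all: it applies dominated convergence directly to $\bE[\exp(i W_n(t) y)]$ using the pointwise limit of Theorem \ref{t1} together with condition 2, and reserves the error estimate of Corollary \ref{cor1} for the subsequent remark on the speed of convergence, where the stronger moment hypothesis \eqref{hyp-4} is explicitly added. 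To close your argument you would need either a version of Corollary \ref{cor1} with $n_\epsilon$ uniform in $\lambda$, or a direct $n$-uniform bound on $|\psi_\xi(y/n^{1/N})|^{\lfloor nt\rfloor}$ by a $|\mu|$-integrable function.
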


\begin{proof}
As stated in Remark \ref{rem2}, we fix $t_0 = 0$ for simplicity.
By Hypothesis \ref{h1}.1.\ the integral
\begin{align*}
f(x+W_n(t)) = \int_{\bR} \exp(i \, (x + W_n(t)) \, y) \, {\rm d}\mu(y)
\end{align*}
is well defined for any $x \in \bR$ and for any value of $W_n(t)$.
\\
We remark that $W_n(t)$, for any $n \in \bN$ and $t \in \bR$, can assume only a finite number of possible values; therefore, we have that
\begin{align*}
\bE[f(x + W_n(t))] = \bE\left[ \int_{\bR} \exp(i \, (x + W_n(t)) \, y) \, {\rm d}\mu(y) \right] =  \int_{\bR} e^{i \, x \, y} \bE\left[\exp(i \, W_n(t) \, y)\right]  \, {\rm d}\mu(y)
\end{align*}
which implies
\begin{align*}
\lim_{n \to \infty} \bE[f(x + W_n(t))] = \lim_{n \to \infty}  \int_{\bR} e^{i \, x \, y} \bE\left[\exp(i \, W_n(t) \, y)\right]  \, {\rm d}\mu(y)
\end{align*}
By the dominated convergence theorem, which holds thanks to Theorem \ref{t1} and Hypothesis \ref{h1}.2., we have
\begin{multline*}
\lim_{n \to \infty}  \int_{\bR} e^{i \, x \, y} \bE\left[\exp(i \, W_n(t) \, y)\right]  \, {\rm d}\mu(y)
= \int_{\bR} e^{i \, x \, y} \lim_{n \to \infty} \bE\left[\exp(i \, W_n(t) \, y)\right]  \, {\rm d}\mu(y)
\\
= \int_{\bR} e^{i\, x\, y} \exp\left(\frac{i^N \alpha}{N!} t \, y^N \right) \, {\rm d}\mu(y)
\end{multline*}
and the right-hand side defines the solution \eqref{solHoc} of problem \eqref{e6}.

If we assume further that Hypothesis \ref{h1}.3.\ holds, then we can take the derivative in the formula \eqref{e8} to get
\begin{align*}
\frac{\partial}{\partial t} u(t,x) = \frac{\alpha}{N!} \frac{\partial^N}{\partial x^N} u(t,x) = \frac{i^N \alpha}{N!} \int_{\bR} \lambda^N e^{i x \lambda} \exp\left(\frac{i^N \alpha}{N!} t \lambda^N \right) \, {\rm d}\mu(\lambda)
\end{align*}
and Hypothesis \ref{h1}.3 implies that the integral on the right hand side is absolutely convergent and all the quantities are finite.
\end{proof}

\begin{remark}
Corollary \ref{cor1} allows one to estimate the speed of convergence of the approximated solution $u_n(t,x)=\bE[f(x+W_n(t))]$ to the solution $u(t,x)$. Indeed let us assume that $f\in D(T_1,T_2)$ is an admissible initial datum satisfying conditions 1.\ and 2.\ and 3. in Hypothesis \ref{h1} above, for $T_1 < t_0 < T_2$. Let us assume moreover that 
\begin{equation}\label{hyp-4}
\int_{\bR} \left|x^{2N} \, \exp\left(i^N \alpha  \frac{x^N}{N!}(t-t_0)\right) \right| \, {\rm d}|\mu|(x) < \infty, \qquad\forall t \in (T_1,T_2).
\end{equation}
Then by corollary \ref{cor1} one can see that for any $\epsilon >0$ there exists a $n_\epsilon\in \bN$ such that for $n>n_\epsilon$:
\begin{equation*}
|u(t,x)-u_n(t,x)|\leq (1+\epsilon)\frac{C(t)}{n}\qquad \forall x\in \bR, \, t\in (T_1,T_2)
\end{equation*}
where
\begin{multline}\nonumber
C(t)=\frac{|\alpha|}{N!}\int |x|^N\left|\exp\left(i^N \alpha  \frac{x^N}{N!}(t-t_0)\right) \right| \, {\rm d}|\mu|(x) +\\ +|\alpha|^2(t-t_0)\left(\frac{1}{2(N!)^2}-\frac{1}{(2N)!}\right)\int |x|^{2N}\left|\exp\left(i^N \alpha  \frac{x^N}{N!}(t-t_0)\right) \right| \, {\rm d}|\mu|(x).
\end{multline}
\end{remark}

\section{A semigroup formulation for the $N$-th order Laplacian}\label{sez6}

Let $\cF_0$ be the set of functions $f:\bR\to \bC$ of the form
$$f(x)=\int e^{iyx}d\mu (y),\qquad x\in \bR$$
for every  $\mu$ complex bounded variation measure on $\bR$.
Since the space of bounded complex measures $\cM(\bR)$ is a Banach algebra under convolution in the total variation norm $\|\mu\|$, one has that $\cF_0$ is a Banach algebra under multiplication in the norm $\|f\|_0:=\|\mu\|$, with $f(x)=\int e^{iyx}d\mu (y)$. The elements of $\cF_0$ are bounded continuous functions and we have $\|f\|_\infty\leq \|f\|_0$.

For any $n \in \bN$ let us  
 denote by $\cF_n\subset \cF_0$ the set of functions $f\in \cF_0$ such that $\int_\bR e^{n |x|}d |\mu |(x)<\infty$ endowed with the norm 
$\| f\|_n=\int_\bR e^{n |x|}d |\mu |(x)$.  
One has that,  for $n\leq m$, $\cF_m\subseteq \cF_n$ since
$\|f\|_n\leq \|f\|_m$ for all $f\in \cF_m\cap \cF_n$.

Let $\cD$ be the topological vector space given by $ \cD:=\cap_n \cF_n$,  endowed with the topology defined by the system of neighborhoods of 0  of the form $U_{r,\epsilon}:=\{ f\in \cD\, |\,  \|f\|_i<\epsilon, \forall i\leq r\}$, with $r\in \bN$ and $\epsilon \in \bR$, $\epsilon >0$.
One can see that the topology of $\cD$ can be induced by a metric, for instance by means of the following distance
\begin{equation}\label{metricD}
d(f,g):=\sum_{n=0}^\infty 2^{-n}\frac{\|f-g\|_n}{1+\|f-g\|_n},\qquad f,g\in D,
\end{equation}
and $\cD$ is complete with respect to this metric. 
Further a sequence $\{f_n\}_n$ is of Cauchy type with respect to the metric \eqref{metricD} and converges to the element $f\in \cD$ if and only if it is of Cauchy type and convergent to $f\in \cD$ with respect to $\| \,\cdot\, \|_n$ for any $n\in\bN$ 
\cite{Kolmogorov1957}.

Let us consider now the parabolic problem \eqref{e6} in the case where $t_0=0$ and $\alpha\in \bC$ and $N\in \bN$  are such that 
\begin{equation}\label{cond1}
|e^{\frac{i^N}{N!}\alpha x^Nt}|\leq 1
\end{equation} 
for all $x\in\bR$ and $t\geq 0$. If $N $ is even, this condition is satisfied if and only if $Re (-1)^{N/2}\alpha <0$, 
while if $N$ is odd \eqref{cond1} is satisfied if and only if $\alpha\in \bR$, moreover in this case the inequality \eqref{cond1} holds for any $t\in\bR$. 

Under these assumptions on the parameters of the PDE \eqref{e6}, one has that $\cD\subset D(0,+\infty)$. Indeed given an element $f\in \cD$, one can easily verify that $f$ satisfies conditions 1., 2., and 3.\ of hypothesis \ref{h1}. It is then possible to define a strongly continuous semigroup $T(t):\cD\to \cD$ in terms of equation \eqref{e8}, namely:
\begin{equation}
T(t)f(x):=\lim_{n\to \infty}\bE[f(x+W_n(t))], \qquad f \in \cD.
\end{equation}

The bounded operator $T(t)$ maps a function $f\in \cD$, with $f=\hat \mu$, to a function $f_t\in \cD$, with $f_t=\hat \mu_t$, 
where the measure $\mu_t$ is absolutely continuous with respect to $\mu$ with Radon-Nikodym derivative equals to $e^{\frac{i^N}{N!}\alpha x^nt}$, i.e.:
$$d\mu_t(x)=e^{\frac{i^N}{N!}\alpha x^Nt} d\mu(x).$$
One can easily verify the semigroup law. 
By condition \eqref{cond1} one has that $\|T(t)f\|_n\leq \|f\|_n$ for all $n\in \bN$. Moreover, by dominated convergence theorem, for any $n\in \bN$ one has 
$$\lim_{t\downarrow 0}\| T(t)f-f\|_n=0$$
and the semigroup $T(t)$ %, $t\geq 0$ 
is strongly continuous in the topology of $\cD$. The generator $A$ of $T(t)$ is a bounded operator on $\cD$ and it is given by 
\begin{equation}\label{generator}
Af(x)=\int e^{iyx}\frac{i^N}{N!}\alpha y^N d\mu (y),\qquad \forall f\in \cD, \, f=\hat \mu.
\end{equation}
 Indeed for any $n\in\bN$, by the dominated convergence theorem, one has:
$$\lim_{t\downarrow 0}\left\|\frac{(T(t)f-f)}{t}-Af   \right\|_n=\lim_{t\downarrow 0}\int\left|\frac{e^{\frac{i^N}{N!}\alpha x^Nt}-1}{t}-\frac{i^N}{N!}\alpha x^N\right|e^{n |x|}d |\mu |(x)=0.$$
Finally, as the generator $A:\cD\to \cD$ is  bounded, then the semigroup $T(t)$ is uniformly continuous. 

\section{The boundary value problem}\label{sez7}
In this section we consider several different boundary value problems associated to equation \eqref{e6}.

Let us consider first of all equation \eqref{e6} on the half real line $\bR^+$ and restrict ourselves to the case where $N$ is even. 
Let us denote by $D$ (resp. $N$) the boundary value problem with Dirichlet (resp. Neumann) boundary conditions.
Given a function $f:\bR^+\to \bC$, it can be extended to an odd function $f_O:\bR\to \bC$ on the real line in the following way
$$ f_O(x)=\begin{cases}
f(x),  & x\geq 0, \\
-f(-x), & x<0.
\end{cases}$$
In a similar way, a function $f:\bR^+\to \bC$ can be extended to an even function $f_E:\bR\to \bC$ as: 
$$ f_E(x)=\begin{cases}
f(x),  & x\geq 0, \\
f(-x), & x<0.
\end{cases}$$
Let us denote by $\cD_D$, resp. $\cD_N$, the following subsets of $\cD$:
\begin{align*}
\cD_D:=& \{f\in \cD\, |\, f(x)=-f(-x)\}
\\
\cD_N:=& \{f\in \cD\, |\, f(x)=f(-x)\}.
\end{align*}
By the regularity of the elements of $\cD$, one can see that the functions $f\in \cD_D$ satisfy the equality $f(0)=0$, while the functions $f\in \cD_N$ satisfy the equality $f'(0)=0$.
Let us consider the restriction $A_D:\cD_D\to \cD_D$ (resp. $A_N:\cD_N\to \cD_N$) of the bounded operator $A\in \cL(\cD)$ given by \eqref{generator} to the subspace $\cD_D\subset \cD$ (resp. $\cD_N\subset \cD$). 
 The following proposition extends the results stated in Theorem \ref{teo10} and gives a representation of the form \eqref{e8} for the solution of the boundary value problems.

\begin{theorem}\label{th-Dir-Neu}
Let $N$ be an even integer. 
The operator $A_{BC}:\cD_{BC}\to \cD_{BC}$, where $BC=D$ or $BC=N$,  is a bounded operator on $\cD_{BC}$ and generates a uniformly continuous semigroup 
$T_{BC}( t):\cD_{BC}\to \cD_{BC}$, given by
$$
T_{BC}(t)f(x)=\lim_{n\to\infty}\bE[f(x+W_n(t))].
$$
\end{theorem}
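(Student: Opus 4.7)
The plan is to leverage what has already been established in Section \ref{sez6}: $A$ is a bounded operator on $\cD$ generating the uniformly continuous semigroup $T(t)$ with the probabilistic representation $T(t)f(x)=\lim_{n\to\infty}\bE[f(x+W_n(t))]$ (via Theorem \ref{teo10}). It will therefore suffice to verify that (a) $\cD_{BC}$ is a closed subspace of $\cD$, and (b) $A$ leaves $\cD_{BC}$ invariant. Once these hold, $A_{BC}:=A|_{\cD_{BC}}$ is a bounded operator on a complete subspace, the exponential series $\sum_{k\ge 0} t^k A^k/k!$ preserves $\cD_{BC}$ termwise, and the restriction $T_{BC}(t):=T(t)|_{\cD_{BC}}$ is automatically a uniformly continuous semigroup on $\cD_{BC}$ generated by $A_{BC}$, with the probabilistic representation inherited directly from Theorem \ref{teo10}.

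For (a), I would note that the reflection $R:f\mapsto f(-\cdot)$ acts on $\cD$ via $\widehat{\mu}\mapsto\widehat{\mu_-}$, where $\mu_-$ is the pushforward of $\mu$ under $y\mapsto -y$; since $|\mu_-|=|\mu|$, $R$ is an isometry for every seminorm $\|\cdot\|_n$, hence continuous on $\cD$. Consequently $\cD_D=\ker(I+R)$ and $\cD_N=\ker(I-R)$ are closed subspaces of $\cD$, and therefore complete in the induced metric.

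The heart of the argument is (b), and this is where the assumption that $N$ is \emph{even} enters essentially. Using the explicit formula \eqref{generator} and the substitution $y\mapsto -y$ inside the integral representing $Af(-x)$, one produces a factor $(-y)^N=y^N$, showing that $A$ commutes with $R$; hence $A$ preserves both $\ker(I+R)$ and $\ker(I-R)$. Equivalently, $\partial_x^N$ preserves parity when $N$ is even. The main obstacle of the proof is precisely this check: if $N$ were odd, the sign change $(-y)^N=-y^N$ would swap $\cD_D$ and $\cD_N$, and neither would be $A$-invariant, so the extension-by-parity strategy for encoding Dirichlet/Neumann conditions on $\bR^+$ relies crucially on the parity of $N$. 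Finally, the classical boundary conditions $f(0)=0$ for $f\in\cD_D$ and $f'(0)=0$ for $f\in\cD_N$ follow automatically from the $C^\infty$ regularity and the parity of elements of $\cD_{BC}$, so that the restriction of $T_{BC}(t)f$ to $\bR^+$ solves the corresponding boundary value problem for \eqref{e6}.
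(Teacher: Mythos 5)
Your proof is correct and follows essentially the same route as the paper's: the crux in both is that $A$ preserves $\cD_{BC}$ because the substitution $y\mapsto -y$ in the formula \eqref{generator} produces the factor $(-y)^N=y^N$ when $N$ is even, after which the semigroup and its probabilistic representation are simply inherited by restriction. Your additional observation that $\cD_{BC}=\ker(I\pm R)$ is closed (hence complete) is a point the paper leaves implicit, but it is a minor completion rather than a different approach.
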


\begin{proof}
We give the proof in the case of Dirichlet boundary condition. The proof in the case of Neumann boundary condition is completely analogous.\\
Given a function $f\in\cD_D$,  it can be represented as
$f(x)=\int e^{iyx}d\mu (y)$, where $\int e^{n|x|}d|\mu|(x)<\infty$ for all $n\in \bN$. By using the symmetry of $f$, one has
$$
f(x)=\frac{f(x)-f_-(x)}{2}=\frac{1}{2}\int (e^{iyx}-e^{-iyx})d\mu(y)=\frac{1}{2}\int e^{iyx}d\mu(y)-\frac{1}{2}\int e^{iyx}d\mu_-(y),
$$
where $f_-(x):=f(-x)$ and $\mu_-([a,b]) = \mu([-b,-a])$.
\\
The operator $A_D$ is given by:
$$A_Df(x)=Af(x)=\int e^{iyx}\frac{i^N}{N!}\alpha y^Nd\mu (y)$$
on the other hand it is also equal to 
\begin{eqnarray}
A_Df(x)&=&\frac{Af(x)-Af_-(x)}{2}=\frac{1}{2}\int e^{iyx}\frac{i^N}{N!}\alpha y^N d\mu(y)-\frac{1}{2}\int e^{iyx}\frac{i^N}{N!}\alpha y^N d\mu_-(y)\nonumber\\
 &=& \frac{1}{2}\int( e^{iyx}-e^{-iyx})\frac{i^N}{N!}\alpha y^N d\mu(y)=\frac{A_Df(x)-A_Df(-x)}{2}\nonumber
\end{eqnarray}
and one can conclude that $A_Df\in \cD_D$. Moreover $A_D$ is bounded and generates an uniformly continuous semigroup $T_{D}( t):\cD_{D}\to \cD_{D}$, given by
$$
T_{D}(t)f(x)=\lim_{n\to\infty}\bE[f(x+W_n(t))].
$$
\end{proof}

\begin{remark}
This result cannot be extended to the case of $N$ odd. Indeed in this case, the operator $A_D$ (resp. $A_N$ ) does not map $\cD_D$ (resp. $\cD_N$) into itself.
\end{remark}

Let us consider now the parabolic problem \eqref{e6}  on the interval $[0,L]$, with  periodic ($P$) boundary conditions.
\\
Let us denote by $\cD_{P(L)}\subset \cD$ the set of functions $f\in \cD$ that are periodic with period equal to $L$, i.e. 
$$\cD_{P(L)}:=\{f\in \cD\, |\, f(x)=f(x+L),\  %f'(x) = f'(x+L) \ 
\forall\, x\in \bR\}.$$
Due to the smoothness of $f$, every derivative of $f$ is periodic as well. By the periodicity condition, a function $f\in \cD_{P(L)}$ can be represented as a Fourier series of the form
\begin{equation}\label{periodic}
f(x)=\sum_{k=-\infty}^{+\infty}c_ke^{ik\frac{2\pi}{L}x}, \qquad c_k=\frac{1}{2\pi}\int f(x)e^{-ik\frac{2\pi}{L}x}dx.
\end{equation}
On the other hand $f\in \cD$ has the form $f(x)=\int e^{ixy}d\mu(y)$, and by a comparison with \eqref{periodic},   the measure $\mu$ associated to $f$ has the form 
\begin{align*}
 \mu= \sum_{k=-\infty}^{+\infty}c_k \delta _{2 \pi k/L}, \text{ with } \sum_{k=-\infty}^{+\infty}|c_k|e^{\frac{2\pi}{L}n|k|}<\infty \quad \forall n\in \bN.
\end{align*}

 Let us consider the restriction $A_P:\cD_{P(L)}\to \cD_{P(L)}$ of the bounded operator $A\in \cL(\cD)$ given by \eqref{generator} to the subspace 
$\cD_{P(L)}\subset \cD$. The following holds
\begin{theorem}\label{th-periodic}
For any $N\in \bN$, $N\geq 2$ the operator $A_{P}:\cD_{P(L)}\to \cD_{P(L)}$  is a bounded operator on $\cD_{P(L)}$ and generates a uniformly continuous semigroup 
$T_{P}( t):\cD_{P(L)}\to \cD_{P(L)}$, given by
$$
T_{P}(t)f(x)=\lim_{n\to\infty}\bE[ f(x+W_n(t))], \qquad f\in \cD_{P(L)}.
$$
\end{theorem}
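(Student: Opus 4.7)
The plan is to adapt the strategy of Theorem \ref{th-Dir-Neu} to the periodic setting, working at the level of the Fourier (spectral) description of $\cD_{P(L)}$ recorded in the paragraph preceding the statement: $f \in \cD_{P(L)}$ if and only if $f = \hat\mu$ with $\mu = \sum_{k\in\bZ} c_k\, \delta_{2\pi k/L}$ and $\sum_{k\in\bZ}|c_k| e^{(2\pi/L)n|k|} < \infty$ for every $n \in \bN$. The rest of the argument reduces to manipulations of the Fourier coefficients.

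The first step is to check that $A$ leaves $\cD_{P(L)}$ invariant. Substituting the atomic measure $\mu$ into formula \eqref{generator} gives
$$
A f(x) = \sum_{k\in\bZ} c_k \, \frac{i^N \alpha}{N!}\left(\frac{2\pi k}{L}\right)^{\!N} e^{i(2\pi k/L)x},
$$
which is again $L$-periodic, with new Fourier coefficients differing from $c_k$ only by a polynomial factor of degree $N$ in $k$. Since $|k|^N \le C_{N,\varepsilon}\, e^{\varepsilon|k|}$ for every $\varepsilon > 0$, each seminorm $\|\cdot\|_n$ of $Af$ is bounded by $\|f\|_{n+1}$ (up to a constant depending on $N$, $\alpha$, $L$), so $Af \in \cD_{P(L)}$. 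Hence $A_P := A|_{\cD_{P(L)}}$ is well defined and inherits boundedness from $A \in \cL(\cD)$; being bounded, it automatically generates the uniformly continuous semigroup $T_P(t) = \exp(tA_P)$ on $\cD_{P(L)}$. The very same Fourier-coefficient computation, now with multiplier $\exp(\frac{i^N \alpha}{N!}(2\pi k/L)^N t)$ in place of $\frac{i^N \alpha}{N!}(2\pi k/L)^N$, shows $T_P(t)\cD_{P(L)} \subseteq \cD_{P(L)}$ and identifies $T_P(t)$ with the restriction of the semigroup $T(t)$ of Section \ref{sez6} to $\cD_{P(L)}$.

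For the probabilistic representation I would apply Theorem \ref{teo10} to each $f \in \cD_{P(L)} \subset \cD$: the conclusion is
$$
T_P(t) f(x) = T(t) f(x) = \int_{\bR} e^{ixy} \exp\!\left(\frac{i^N\alpha}{N!} y^N t\right) d\mu(y) = \lim_{n \to \infty} \bE[f(x+W_n(t))].
$$
The main technical obstacle is verifying Hypothesis \ref{h1}(2) for the atomic measure $\mu$: it reduces to $\sum_k |c_k|\,|e^{(i^N\alpha/N!)(2\pi k/L)^N t}| < \infty$, which under the standing assumption \eqref{cond1} of Section \ref{sez6} is dominated by $\sum_k |c_k| = \|f\|_0 < \infty$; condition (1) is immediate from $\cF_n$-summability, and condition (3) follows from the same polynomial-versus-exponential argument used for $A_P$. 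Note that no parity restriction on $N$ is needed (in contrast with Theorem \ref{th-Dir-Neu}), because here the invariance argument does not rely on a reflection symmetry but on the fact that each mode $e^{i(2\pi k/L)x}$ is an eigenfunction of $\partial_x^N$, which $A$ merely rescales.
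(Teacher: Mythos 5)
Your proposal is correct and follows essentially the same route as the paper: both arguments reduce to checking that $A$ maps $\cD_{P(L)}$ into itself via the atomic form $\mu=\sum_k c_k\,\delta_{2\pi k/L}$ of the associated measure, and then invoke the general semigroup machinery of Section \ref{sez6} for boundedness, uniform continuity, and the probabilistic representation. The extra details you supply (the seminorm estimate for $Af$ and the verification of Hypothesis \ref{h1}) are harmless but already subsumed by the fact that $A$ is bounded on all of $\cD$ and that $\cD\subset D(0,+\infty)$ under condition \eqref{cond1}.
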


\begin{proof}
It is sufficient to show that $A$ maps $\cD_{P(L)}$ into itself. This can be verified directly by using the definition of the operator $A$ and the particular form of the measure $\mu$ associated to an element $f\in \cD_{P(L)}$:
\begin{align*}
f(x)&=\int e^{ixy}d\mu(y), \qquad \mu=\sum_{k=-\infty}^{\infty}c_k \delta _{2\pi k/L}  \\
Af(x)&=\int e^{ixy}d\nu(y), \qquad d\nu(y)=\frac{i^N}{N!}\alpha y^Nd\mu(y)=\sum_{k=-\infty}^{\infty}\frac{i^N}{N!}\alpha c_k k^N \delta _{2\pi k/L}.
\end{align*}
\end{proof}

We conclude this section with a discussion of the Dirichlet and Neumann boundary conditions on the interval $[0,L]$.
Let us denote by $\cD_D([0,L])$, resp. $\cD_N([0,L])$, the subsets of $\cD_{P(2L)}$ made of functions that are odd, respectively even, on the real line: 
\begin{align*}
\cD_D([0,L])&:= \cD_{P(2L)}\cap \cD_D=\{f\in \cD_{P(2L)}\, |\, f(x)=-f(-x)\} 
\\
\cD_N([0,L])&:=\cD_{P(2L)}\cap \cD_N= \{f\in \cD_{P(2L)}\, |\, f(x)=f(-x)\}.
\end{align*}
One can easily verify that any function $f\in \cD_D([0,L])$, resp. $\cD_N([0,L])$,  satisfies Dirichlet, resp. Neumann, conditions on the boundary of the interval $[0,L]$.
Moreover  the elements of $f\in \cD_D([0,L])$ can be represented in the following form
$$f(x)=\int e^{ixy}d\mu(y), \quad \mu=\sum_{k=-\infty}^{\infty}c_k \delta _{\pi k/L}, \quad c_0=0,\ c_k+c_{-k}=0, \forall k\geq 1,$$
or, equivalently, in the form:
$$f(x)=\sum_{k=1}^\infty i (c_k-c_{-k})\sin (k \frac{\pi}{L} x).$$
Analogously the elements of $f\in \cD_N([0,L])$ can be represented in the following form
$$f(x)=\int e^{ixy}d\mu(y), \quad \mu=\sum_{k=-\infty}^{\infty}c_k \delta _{\pi k/L}, \quad c_k-c_{-k}=0, \forall k\geq 1,$$
or, equivalently, in the form:
$$f(x)=c_0+\sum_{k=1}^\infty i (c_k+c_{-k})\cos (k \frac{\pi}{L}x).$$
Let $A_D([0,L])$, resp. $A_N([0,L])$, be the restriction of the operator $A$ to the subspace $\cD_D([0,L])\subset \cD$, resp. $\cD_N([0,L])\subset \cD$. By considering only even values of the integer $N$, the following holds:

\begin{theorem}\label{th-Dir-Neu-periodic}
Let $N$ be an even integer. The operator $A_{BC}([0,L]):\cD_{BC}([0,L])\to \cD_{BC}([0,L])$, where $BC=D$ or $BC=N$,  is a bounded operator on $\cD_{BC}([0,L])$ and generates a uniformly continuous semigroup 
$T_{BC}( t):\cD_{BC}([0,L])\to \cD_{BC}([0,L])$, given by
$$
T_{BC}(t)f(x)=\lim_{n\to\infty}\bE[ f(x+W_n(t))], \qquad f\in \cD_{BC}([0,L]).
$$
\end{theorem}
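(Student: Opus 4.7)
The plan is to imitate the proofs of Theorems \ref{th-Dir-Neu} and \ref{th-periodic} in combination: it suffices to verify that the bounded operator $A\in \cL(\cD)$ defined in \eqref{generator} maps the subspace $\cD_{BC}([0,L])$ into itself, since then $A_{BC}([0,L])$ is automatically bounded as the restriction of a bounded operator to an invariant subspace, and the standard theory of uniformly continuous semigroups gives $T_{BC}(t) = \exp(t A_{BC}([0,L]))$, with the strong continuity and the semigroup law inherited from $T(t)$ on $\cD$.

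First I would write any $f \in \cD_{BC}([0,L])$ in the form $f(x) = \int e^{ixy} d\mu(y)$ where $\mu = \sum_{k \in \bZ} c_k \, \delta_{\pi k/L}$, with the additional symmetry $c_0 = 0$ and $c_k + c_{-k} = 0$ for $k \geq 1$ (Dirichlet case) or $c_k - c_{-k} = 0$ for $k \geq 1$ (Neumann case), and with $\sum_k |c_k| e^{n \pi |k|/L} < \infty$ for every $n \in \bN$. Applying formula \eqref{generator}, the measure associated to $Af$ is
\begin{equation*}
d\nu(y) = \frac{i^N}{N!}\,\alpha\, y^N\, d\mu(y) = \sum_{k \in \bZ} \frac{i^N \alpha}{N!} \left(\frac{\pi k}{L}\right)^{\!N} c_k \, \delta_{\pi k/L},
\end{equation*}
so $\nu$ is again supported on $\tfrac{\pi}{L}\bZ$, showing that $A$ preserves $\cD_{P(2L)}$ exactly as in Theorem \ref{th-periodic}.

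The key step is then to check the symmetry condition, and here the assumption that $N$ is even is essential: since $(\pi k/L)^N = (-\pi k/L)^N$, the new coefficients $c'_k := \frac{i^N \alpha}{N!}(\pi k/L)^N c_k$ satisfy $c'_k \pm c'_{-k} = \frac{i^N \alpha}{N!}(\pi k/L)^N (c_k \pm c_{-k})$, so the Dirichlet symmetry $c_k + c_{-k} = 0$ (resp.\ Neumann symmetry $c_k - c_{-k} = 0$) is preserved; moreover $c_0 = 0$ trivially yields $c'_0 = 0$ in the Dirichlet case. Hence $Af \in \cD_{BC}([0,L])$, and $A_{BC}([0,L])$ is a well-defined bounded operator. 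The probabilistic representation then follows by applying Theorem \ref{teo10}: every $f \in \cD_{BC}([0,L]) \subset \cD$ satisfies Hypothesis \ref{h1} (because condition \eqref{cond1} holds under the assumptions made in Section \ref{sez6} and the measure $\mu$ has the required exponential decay on its Fourier support), so $\lim_{n \to \infty} \bE[f(x+W_n(t))]$ exists and coincides with $T(t)f(x)$; since $T(t)$ preserves $\cD_{BC}([0,L])$ (it is the exponential of $A$, which does), this limit equals $T_{BC}(t)f(x)$.

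The main subtlety I expect is not the symmetry verification, which is essentially algebraic, but ensuring that the semigroup $T(t)$ on the ambient space $\cD$ really restricts to a semigroup on $\cD_{BC}([0,L])$ in a manner consistent with the probabilistic formula. This is handled by the observation that invariance of $\cD_{BC}([0,L])$ under $A$ implies invariance under each $A^n$, and hence under $\exp(tA) = \sum_n t^n A^n/n!$ (where convergence is in operator norm on $\cD$), so the two definitions of $T_{BC}(t)$ agree. The parity hypothesis on $N$ appears only in the symmetry step, which is exactly why the odd case is excluded (as in the remark following Theorem \ref{th-Dir-Neu}).
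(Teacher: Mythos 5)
Your proposal is correct and follows essentially the same route as the paper, which simply states that the result is obtained by the procedure of Theorems \ref{th-Dir-Neu} and \ref{th-periodic}: you verify that $A$ preserves both the lattice support $\tfrac{\pi}{L}\bZ$ of $\mu$ and, using that $y^N$ is even for $N$ even, the symmetry conditions on the coefficients $c_k$, and then invoke the bounded-generator/uniformly continuous semigroup machinery of Section \ref{sez6}. Your write-up is in fact more detailed than the paper's, and the added remarks on invariance under $\exp(tA)$ and on the role of condition \eqref{cond1} are accurate.
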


The result can be proved by  the same procedure used in the proof of theorems \ref{th-Dir-Neu} and \ref{th-periodic}.

\section*{Acknowledgements}
We are grateful to S. Albeverio, G. Da Prato, L. Tubaro for many interesting discussions.

\section*{Appendix}

For simplicity, fix $t>0$ and consider, first, the characteristic function (recall identity \eqref{eq:W=S} and the computations in the proof of Theorem \ref{t1})
\begin{multline*}
\bE[e^{i\lambda W_{n}(t)}] = \exp \left( \lfloor n t \rfloor \, \log \bE \left[ e^{i \lambda \xi / n^{1/N}} \right] \right)
= \exp \left( \lfloor n t \rfloor \, \log \left( \frac1N \sum_{k=0}^{N-1} \exp\left(\frac{i \lambda e^{2 \pi i k/N}}{n^{1/N}} \right) \right) \right)
\\
= \exp \left( \lfloor n t \rfloor \, \log \left( 1 + \frac1N \sum_{k=0}^{N-1} \left[\exp\left(\frac{i \lambda e^{2 \pi i k/N}}{n^{1/N}} \right) -1 \right]\right) \right)
\end{multline*}
by using the series expansion of $e^x$ and the computations in Lemma \ref{lp1} we get
\begin{align*}
\bE[e^{i\lambda W_{n}(t)}] 
= \exp \left( \lfloor n t \rfloor \, \log \left( 1 + \frac{(i)^N \lambda^N \alpha}{N! \, n} + \frac{(i)^{2N} \lambda^{2N} \alpha^2}{(2N)! \, n^2} + O(\tfrac{1}{n^3}) \right) \right)
\end{align*}
and finally, using the series expansion of $\log(x)$ we obtain
\begin{align}
\label{ea1}
\bE[e^{i\lambda W_{n}(t)}] 
= \exp \left\{ \frac{\lfloor n t \rfloor}{n} \,  \left( \frac{(i)^N \lambda^N \alpha}{N! } \right) + \frac{\lfloor n t \rfloor}{n^2} \,  \left(\frac1{(2N)! } - \frac{1}{2 \,(N!)^2 }  \right) (i)^{2N} \lambda^{2N} \alpha^2 + O(\tfrac{1}{n^2}) \right\}.
\end{align}
For negative times, we have from \eqref{Wnbis} the analog of \eqref{eq:W=S}
\begin{align*}
W_n(-t) \stackrel{\cL}{=} e^{i \pi / N} \left(\frac{\lfloor nt\rfloor}{n} \right)^{1/N} \tilde S_n(\lfloor n t \rfloor);
\end{align*}
hence
\begin{align*}
\bE \left[ e^{i \lambda W_n(-t)} \right] = \bE\left[ e^{i \lambda e^{i \pi / N} W_n(t)} \right]
\end{align*}
and we obtain form \eqref{ea1} the following representation
\begin{align}
\label{ea2}
\bE \left[e^{i\lambda W_{n}(-t)} \right] 
= \exp \left\{ \frac{\lfloor n (-t) \rfloor}{n} \,  \left( \frac{(i)^N \lambda^N \alpha}{N! } \right) + \frac{\lfloor n t \rfloor}{n^2} \,  \left(\frac1{(2N)! } - \frac{1}{2 \,(N!)^2 }  \right) (i)^{2N} \lambda^{2N} \alpha^2 + O(\tfrac{1}{n^2}) \right\}.
\end{align}

\medskip

\subsection*{Proof of Theorem \ref{t2}}
Let $n \to \infty$ in \eqref{ea1} (respectively \eqref{ea2} for negative times) and recall that $\frac{\lfloor n t \rfloor}{n} \to t$ as $n \to \infty$, 
while the other terms in \eqref{ea1} converge to 0.
\qed

\medskip

\subsection*{Proof of Lemma \ref{lemmat2}}
We give the proof for $t > 0$, since the other case follows analogously.
Using again \eqref{ea1} we get
\begin{align*}
%\label{ea2}
\bE[e^{i\lambda W_{n}(t)}] &- e^{\frac{(i)^N \lambda^N \alpha t}{N!}}
=  \exp \left\{ \frac{\lfloor n t \rfloor}{n} \,  \left( \frac{(i)^N \lambda^N \alpha}{N! } \right) + \frac{\lfloor n t \rfloor}{n^2} \,  \left(\frac1{(2N)! } - \frac{1}{2 \,(N!)^2 }  \right) (i)^{2N} \lambda^{2N} \alpha^2 + O(\tfrac{1}{n^2}) \right\} - e^{\frac{(i)^N \lambda^N \alpha t}{N!}}
\\
&= e^{\frac{(i)^N \lambda^N \alpha t}{N!}} \left[\exp \left\{ \frac{\lfloor n t \rfloor}{n} \,  \left( \frac{(i)^N \lambda^N \alpha}{N! } \right) + \frac{\lfloor n t \rfloor}{n^2} \,  \left(\frac1{(2N)! } - \frac{1}{2 \,(N!)^2 }  \right) (i)^{2N} \lambda^{2N} \alpha^2 + O(\tfrac{1}{n^2})   - \frac{(i)^N \lambda^N \alpha t}{N!}\right\}  - 1 \right]
\\
&= e^{\frac{(i)^N \lambda^N \alpha t}{N!}} \left[\exp \left\{ \left(\frac{\lfloor n t \rfloor}{n}-t\right) \,  \left( \frac{(i)^N \lambda^N \alpha}{N! } \right) + \frac{\lfloor n t \rfloor}{n^2} \,  \left(\frac1{(2N)! } - \frac{1}{2 \,(N!)^2 }  \right) (i)^{2N} \lambda^{2N} \alpha^2 + O(\tfrac{1}{n^2})   \right\}  - 1 \right]
\\
&= e^{\frac{(i)^N \lambda^N \alpha t}{N!}} \left[ \left(\frac{\lfloor n t \rfloor}{n}-t\right) \,  \left( \frac{(i)^N \lambda^N \alpha}{N! } \right) + \frac{\lfloor n t \rfloor}{n^2} \,  \left(\frac1{(2N)! } - \frac{1}{2 \,(N!)^2 }  \right) (i)^{2N} \lambda^{2N} \alpha^2 + O(\tfrac{1}{n^2})    \right]
\end{align*}
where in the last passage we use again the series expansion of the exponential function. 
Finally, we obtain the thesis setting
\begin{align*}
f_n(t) = \frac{\lfloor n t \rfloor}{n^2}\left(\frac1{(2N)! } - \frac{1}{2 \,(N!)^2 }  \right) (i)^{2N} \lambda^{2N} \alpha^2 e^{\frac{(i)^N \lambda^N \alpha t}{N!}} + O(\tfrac{1}{n^2})
\end{align*}
and
\begin{align*}
g_n(t) = \left( \frac{\lfloor n t \rfloor}{n} - t \right) \frac{(i)^N \lambda^N \alpha }{N!} e^{\frac{(i)^N \lambda^N \alpha t}{N!}}.
\end{align*}
Notice that $\frac{\lfloor n t \rfloor}{n^2} \sim \frac{t}{n}$ (as stated above, the relation $\sim$ means that the ratio between the functions converges to 1 as $n \to \infty$).
As opposite, the quantity $n \left( \frac{\lfloor n t \rfloor}{n} - t \right)$ does not converge (except in the case $t \in \bZ$); hence, we can only give a bound on this term:
\begin{align*}
|g_n(t)| \le \underbrace{\left| \frac{\lfloor n t \rfloor}{n} - t \right|}_{\le 1} \, \left| \frac{(i)^N \lambda^N \alpha }{N!} e^{\frac{(i)^N \lambda^N \alpha t}{N!}} \right|.
\end{align*}
\qed

\medskip

\subsection*{Proof of Theorem \ref{theo4}}
From \eqref{eq:W=S} for $t=1$, the thesis of Theorem \ref{theo4} follows from \eqref{ea1} with the choice $t=1$:
\begin{align*}
\bE[e^{i\lambda \tilde S_{n}}] 
= \exp \left\{   \left( \frac{(i)^N \lambda^N \alpha}{N! } \right) + \frac1{n} \,  \left(\frac1{(2N)! } - \frac{1}{2 \,(N!)^2 }  \right) (i)^{2N} \lambda^{2N} \alpha^2 + O(\tfrac{1}{n^2}) \right\}.
\end{align*}
Hence, as in the proof of Lemma \ref{lemmat2}, we get
\begin{align*}
\bE[e^{i\lambda \tilde S_{n}}] - e^{ \frac{(i)^N \lambda^N \alpha}{N! } }
&= e^{ \frac{(i)^N \lambda^N \alpha}{N! } } \left[ \exp \left\{  \frac1{n} \,  \left(\frac1{(2N)! } - \frac{1}{2 \,(N!)^2 }  \right) (i)^{2N} \lambda^{2N} \alpha^2 + O(\tfrac{1}{n^2}) \right\} - 1 \right]
\\
&= e^{ \frac{(i)^N \lambda^N \alpha}{N! } } \left[ \frac1{n} \,  \left(\frac1{(2N)! } - \frac{1}{2 \,(N!)^2 }  \right) (i)^{2N} \lambda^{2N} \alpha^2 + O(\tfrac{1}{n^2}) \right]
\end{align*}
and the thesis follows passing to the limit as $n \to \infty$.
\qed

% =========================================================================

\end{document}